\definecolor{ddarkbrown}{rgb}{0.5,0.2,0.05} \definecolor{bbluegray}{rgb}{0.05,0,0.5}
\newtheorem{theorem}{Theorem}[section]
\newtheorem{proposition}[theorem]{Proposition}
\newtheorem{definition}[theorem]{Definition}
\renewenvironment{proof}{\textbf{Proof.}}{\QED\bigskip}
\newcommand{\BEAS}{\begin{eqnarray*}}
\newcommand{\EEAS}{\end{eqnarray*}}
\newcommand{\BEA}{\begin{eqnarray}}
\newcommand{\EEA}{\end{eqnarray}}
\newcommand{\BEQ}{\begin{equation}}
\newcommand{\EEQ}{\end{equation}}
\newcommand{\BIT}{\begin{itemize}}
\newcommand{\EIT}{\end{itemize}}
\newcommand{\BNUM}{\begin{enumerate}}
\newcommand{\ENUM}{\end{enumerate}}
\newcommand{\BA}{\begin{array}}
\newcommand{\EA}{\end{array}}
\newcommand{\reals}{{\mathbb R}}
\newcommand{\complexs}{{\mathbb C}}
\newcommand{\Tr}{\mathop{\bf Tr}}
\newcommand{\Co}{{\mathop {\bf Co}}}
\newcommand{\QED}{~~\rule[-1pt]{6pt}{6pt}}
\newcommand{\argmin}{\mathop{\rm argmin}}
\newcommand{\vect}{\mathop{\bf vec}}
\let \oldsection \section
\renewcommand{\section}{\vspace{3ex plus 1ex}\oldsection}
\newcommand{\pnm}{\mathcal{P}_{[N-1]}^{(1)}}
\newcommand{\pnp}{\mathcal{P}_{[N]}^{(1)}}
\newcommand{\pim}{\mathcal{P}_{[i-1]}^{(1)}}
\newcommand{\pip}{\mathcal{P}_{[i]}^{(1)}}
\newcommand{\yex}{y^{\text{extr}}}
\newcommand{\cheb}{\mathcal{C}^{\tau,G}}
\newcommand{\chebpoly}{\mathcal{T}^{\tau,G}}
\newcommand{\cl}{c^{\lambda}}
\begin{document}
\title{Nonlinear Acceleration of Momentum and Primal-Dual Algorithms}

\author{Raghu Bollapragada}
 \address[Raghu Bollapragada:]{Corresponding author. \\ Mathematics and Computer Science Division,
 Argonne National Laboratory, Lemont, IL, USA. \\The author was a PhD student in the department of Industrial Engineering and Management Sciences at Northwestern University, IL, USA, when this work was done.}
 \email[corresponding author]{raghu.bollapragada@u.northwestern.edu}

 \author{Damien Scieur}
 \address[Damien Scieur:]{SAMSUNG SAIL, Montreal, Canada.\\ This author was a PhD student at INRIA \& D.I., UMR 8548,
 \'Ecole Normale Sup\'erieure, Paris, France, when this work was done.}
 \email{damien.scieur@gmail.com}

 \author{Alexandre d'Aspremont}
 \address[Alexandre d'Aspremont:]{CNRS \& D.I., UMR 8548,
 \'Ecole Normale Sup\'erieure, Paris, France.}
 \email{aspremon@gmail.com}


\keywords{}
\date{\today}
\subjclass[2010]{}

\begin{abstract}
We describe convergence acceleration schemes for multistep optimization algorithms. The extrapolated solution is written as a nonlinear average of the iterates produced by the original optimization method. Our analysis does not need the underlying fixed-point operator to be symmetric, hence handles e.g. algorithms with momentum terms such as Nesterov's accelerated method, or primal-dual methods. The weights are computed via a simple linear system and we analyze performance in both online and offline modes. We use Crouzeix's conjecture to show that acceleration performance is controlled by the solution of a Chebyshev problem on the numerical range of a non-symmetric operator modeling the behavior of iterates near the optimum. Numerical experiments are detailed on logistic regression problems. 
\end{abstract}
\maketitle

\section{Introduction}\label{s:intro}
Extrapolation techniques, such as Aitken's~$\Delta^2$ or Wynn's $\varepsilon$-algorithm, provide an improved estimate of the limit of a sequence using its last few iterates, and we refer the reader to \citep{brezinski2013extrapolation} for a complete survey. These methods have been extended to vector sequences, where they are known under various names, e.g. Anderson acceleration \citep{anderson1965iterative,walker2011anderson}, minimal polynomial extrapolation \citep{cabay1976polynomial} or reduced rank extrapolation \citep{eddy1979extrapolating}.

Classical optimization algorithms typically retain only the last iterate or the average of iterates \citep{polyak1992acceleration} as their best estimate of the optimum, throwing away all the information contained in the converging sequence. This is highly wasteful from a statistical perspective and extrapolation schemes estimate instead the optimum using a weighted average of the last iterates produced by the underlying algorithm, where the weights depend on the iterates (i.e. a {\em nonlinear} average). Overall, computing those weights means solving a small linear system so nonlinear acceleration has marginal computational complexity.

Recent results by \citep{scieur2016regularized} adapted classical extrapolation techniques related to Aitken's~$\Delta^2$, Anderson's method and minimal polynomial extrapolation to design extrapolation schemes for accelerating the convergence of basic optimization methods such as gradient descent. They showed that using only iterates from fixed-step gradient descent, extrapolation algorithms achieve the optimal convergence rate of \citep{nesterov2013introductory} {\em without any modification to the original algorithm}. However, these results are only applicable to iterates produced by single-step algorithms such as gradient descent, where the underlying operator is symmetric, thus excluding much faster momentum-based methods such as SGD with momentum or Nesterov's algorithm. Our results here seek to extend those of \citep{scieur2016regularized} to multistep methods, i.e. to accelerate accelerated methods.

Our contribution here is twofold. First, we show that the accelerated convergence bounds in \citep{scieur2016regularized} can be directly extended to multistep methods when the operator describing convergence near the optimum has a particular block structure, by modifying the extrapolating sequence. This result applies in particular to Nesterov's method and the stochastic gradient algorithms with a momentum term. Second, we use Crouzeix's recent results \citep{Crou07,Crou17,Gree17} to show that, in the general non-symmetric case, acceleration performance is controlled by the solution of a Chebyshev problem on the numerical range of the linear, non-symmetric operator modelling the behavior of iterates near the optimum. We characterize the shape of this numerical range for various classical multistep algorithms such as Nesterov's method \citep{Nest83}, and Chambolle-Pock's algorithm \citep{chambolle2011first}.

We then study the performance of our technique on a  logistic regression problem. The online version (which modifies iterations) is competitive with L-BFGS in our experiments and significantly faster than classical accelerated algorithms. Furthermore, it is robust to miss-specified strong convexity parameters.

\subsection*{Organization of the paper}

In Section~\ref{s:nacc}, we describe the iteration schemes that we seek to accelerate, introduce the Regularized Nonlinear Acceleration (RNA) scheme, and show how to control its convergence rate for linear iterations (e.g. solving quadratic problems). 

In Section~\ref{s:crouzeix} we show how to bound the convergence rate of acceleration schemes on generic nonsymmetric iterates using Crouzeix's conjecture and bounds on the minimum of a Chebyshev problem written on the numerical range of the nonsymmetric operator. We apply these results to Nesterov's method and the Chambolle-Pock primal-dual algorithm in Section~\ref{s:algos}.

We extend our results to generic nonlinear updates using a constrained formulation of RNA (called CNA) in Section~\ref{s:nonlin}. We show optimal convergence rates in the symmetric case for CNA on simple gradient descent with linear combination of previous iterates in Section~\ref{s:grad}, producing a much cleaner proof of the results in~\citep{scieur2016regularized} on RNA. In Section~\ref{s:online}, we show that RNA can be applied online, i.e. that we can extrapolate iterates produced by an extrapolation scheme at each iteration (previous results only worked in batch mode) and apply this result to speed up Nesterov's method.

\section{Nonlinear Acceleration}\label{s:nacc}
We begin by describing the iteration template for the algorithms to which we will apply acceleration schemes. 

\subsection{General setting} Consider the following optimization problem
\BEQ\label{eq:fprob}
\min_{x\in\reals^n} f(x)
\EEQ
in the variable $x\in\reals^n$, where $f(x)$ is strongly convex with parameter~$\mu$ with respect to the Euclidean norm, and has a Lipschitz continuous gradient with parameter $L$ with respect to the same norm. 
We consider the following class of algorithms, written
\BEQ \label{eq:general_iteration}
\left\{\BA{l}
    x_{i} = g(y_{i-1})\\
    y_i = \textstyle \sum_{j=1}^i \alpha_j^{(i)} x_j + \beta_j^{(i)} y_{j-1},
\EA\right.
\EEQ
where $x_i,y_i\in\reals^d$ and $g: \reals^d \to \reals^d$ is an iterative update, potentially stochastic. For example, $g(x)$ can be a gradient step with fixed stepsize, in which case $g(x)=x - h\nabla f(x)$. We assume the following condition on the coefficients $\alpha$ and $\beta$, to ensure consistency \citep{scieur2017integration},
\[
    \textbf{1}^T(\alpha + \beta) = 1, \quad \forall k, \ \alpha_j \neq 0.
\]
We can write these updates in matrix format, with
\BEQ\label{eq:def_xy}
    X_i = [x_1,x_2,\ldots, x_i], \quad  Y_i = [y_0,y_1,\ldots, y_{i-1}]. 
\EEQ
Using this notation, \eqref{eq:general_iteration} reads (assuming $x_0=y_0$)
\BEA\label{eq:general_iteration_matrix}
    X_i = g(Y_{i-1})\,, \qquad Y_i = [x_0, X_{i}]L_i,
\EEA
where $g(Y)$ stands for $[g(y_0),g(y_1),\ldots,g(y_{i-1})]$ and the matrix $L_i$ is upper-triangular of size $i\times i$ with nonzero diagonal coefficients, with columns summing to one. The matrix $L_i$ can be constructed iteratively, following the recurrence
\BEQ
    L_i = \begin{bmatrix} L_{i-1} & \alpha_{[1:i-1]} + L_{i-1}\beta \\
    0_{1\times i-1} & \alpha_i
    \end{bmatrix}, \quad L_0 = 1.\label{eq:recurence_L}
\EEQ
In short, $L_i$ gathers coefficients from the linear combination in~\eqref{eq:general_iteration}. This matrix, together with $g$, characterizes the algorithm.

The iterate update form \eqref{eq:general_iteration} is generic and includes many classical algorithms such as the accelerated gradient method in \citep{nesterov2013introductory}, where
\[
\begin{cases}
    x_{i} &= g(y_{i-1}) = y_{i-1} - \frac{1}{L} \nabla f(y_{i-1}) \\
    y_{i} &= \left(1+\frac{i-1}{i+2}\right)x_{i} - \frac{i-1}{i+2}\; x_{i-1}.
\end{cases}
\]
As in~\citep{scieur2016regularized} we will focus on improving our estimates of the solution to problem~\eqref{eq:fprob} by tracking only the sequence of iterates $(x_i, y_i)$ produced by an optimization algorithm, without any further oracle calls to $g(x)$. The main difference with the work of \citep{scieur2016regularized} is the presence of a linear combination of previous iterates in the definition of $y$ in \eqref{eq:general_iteration}, so the mapping from $x_{i}$ to $x_{i+1}$ is usually \textit{non-symmetric}. For instance, for Nesterov's algorithm, the Jacobian of $x_{i+1}$ with respect to $x_i$, $y_i$ reads
\[
    J_{x_{i+1}} = \begin{bmatrix}
        0 & J_g\\
        \left(1+\frac{i-2}{i+1}\right) \textbf{I} & - \frac{i-2}{i+1}\textbf{I}
    \end{bmatrix} \neq J_{x_{i+1}}^T
\]
where $J_{x_{i+1}}$ is the Jacobian of the function $g$ evaluated at $x_{i+1}$. In what follows, we show that looking at the residuals
\BEA
    r(x) \triangleq g(x)-x, \quad r_i = r(y_{i-1}) = x_i-y_{i-1}, \quad R_i = [r_1\ldots r_i], \label{eq:residue}
\EEA
allows us to recover the convergence results from \citep{scieur2016regularized} when the Jacobian of the function $g$, written $J_g$, is symmetric. Moreover, we extend the analysis for \textit{non symmetric} Jacobians. This allows us to accelerate for instance accelerated methods or primal-dual methods. We now briefly recall the key ideas driving nonlinear acceleration schemes.
\subsection{Linear Algorithms}
In this section, we focus on iterative algorithms $g$ that are linear, i.e., where
\BEQ
    g(x) = G(x-x^*) + x^*. \label{eq:linear_g}
\EEQ
The matrix $G$ is of size $d\times d$, and, contrary to \citep{scieur2016regularized}, we do not assume symmetry. Here, $x^*$ is a fixed point of $g$. In optimization problems where $g$ is typically a gradient mapping $x^*$ is the minimum of an objective function.  Its worth mentioning that \eqref{eq:linear_g} is equivalent to $Ax+b$, thus we do not require $x^*$ to evaluate the mapping $g(x)$. We first treat the case where $g(x)$ is linear, as the nonlinear case will then be handled as a perturbation of the linear one. 

We introduce $\pnp$, the set of all polynomials $p$ whose degree is \textit{exactly} $N$ (i.e., the leading coefficient is nonzero), and whose coefficients sum to one. More formally,
\BEQ
    \pnp = \{ p \in \reals[x]: \deg(p) = N,\,  p(1) = 1  \}.
\EEQ
The following proposition extends a result by \citet{scieur2016regularized} showing that iterates in~\eqref{eq:general_iteration} can be written using polynomials in $\pnp$. This formulation is helpful to derive the rate of converge of the Nonlinear Acceleration algorithm.
\begin{proposition} \label{prop:poly_iter}
Let $g$ be the linear function \eqref{eq:linear_g}. Then, the $N$-th iteration of \eqref{eq:general_iteration} is equivalent to
\BEA \label{eq:polynomial_iteration}
    x_N = x^* + G(y_{N-1}-x^*), \qquad y_N = x^* + p_N(G)(x_0-x^*),\quad \mbox{for some $p_N\in \pnp$.}
\EEA
\end{proposition}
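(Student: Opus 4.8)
The plan is to establish the two identities separately: the expression for $x_N$ by direct substitution, and the expression for $y_N$ by induction on $N$. The formula for $x_N$ is immediate, since $g$ is the affine map \eqref{eq:linear_g}: we have $x_N = g(y_{N-1}) = x^* + G(y_{N-1}-x^*)$, which is precisely the claimed form and requires nothing further. The entire content of the proposition therefore lies in showing $y_N = x^* + p_N(G)(x_0-x^*)$ for some $p_N\in\pnp$.

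For this second identity I would use strong induction on $N$, so that the hypothesis supplies $y_j = x^* + p_j(G)(x_0-x^*)$ with $p_j\in\mathcal{P}^{(1)}_{[j]}$ for every $j\le N-1$; this is needed because the recurrence $y_N = \sum_{j=1}^N \alpha_j^{(N)} x_j + \beta_j^{(N)} y_{j-1}$ from \eqref{eq:general_iteration} references all earlier iterates. The base case $N=0$ holds with $p_0\equiv 1$, since $y_0=x_0$ and the unique degree-zero polynomial with $p_0(1)=1$ is the constant $1$. For the inductive step I first rewrite each $x_j = g(y_{j-1}) = x^* + G\,p_{j-1}(G)(x_0-x^*)$ using linearity together with the hypothesis, then substitute both the $x_j$ and the $y_{j-1}$ expressions into the recurrence for $y_N$. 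Collecting terms, the coefficient multiplying $x^*$ is $\sum_{j=1}^N(\alpha_j^{(N)}+\beta_j^{(N)})$, which equals $1$ by the consistency condition $\mathbf{1}^T(\alpha+\beta)=1$, so the constant part reproduces $x^*$ correctly; the remainder is $p_N(G)(x_0-x^*)$ with $p_N(x) = \sum_{j=1}^N (\alpha_j^{(N)} x + \beta_j^{(N)})\,p_{j-1}(x)$.

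It then remains to verify that this $p_N$ lies in $\pnp$, namely that $p_N(1)=1$ and that $\deg p_N = N$ exactly. The normalization is easy: evaluating at $x=1$ and using $p_{j-1}(1)=1$ from the hypothesis gives $p_N(1)=\sum_{j=1}^N(\alpha_j^{(N)}+\beta_j^{(N)})=1$, again by consistency. The step I expect to demand the most care is the exact-degree claim, which is exactly where the assumption $\alpha_j\neq 0$ is used: for each $j<N$ the summand $(\alpha_j^{(N)} x + \beta_j^{(N)})p_{j-1}(x)$ has degree at most $j\le N-1$, whereas the $j=N$ summand $(\alpha_N^{(N)} x + \beta_N^{(N)})p_{N-1}(x)$ has degree exactly $N$ because $p_{N-1}$ has exact degree $N-1$ and $\alpha_N^{(N)}\neq 0$. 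Since no other summand reaches degree $N$, the leading coefficient of $p_N$ equals $\alpha_N^{(N)}$ times the nonzero leading coefficient of $p_{N-1}$ and cannot be cancelled, so $\deg p_N = N$, which closes the induction and completes the proof.
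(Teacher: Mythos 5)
Your proposal is correct and follows essentially the same route as the paper: induction on the iteration count, substitution of the linear map $g$ into the update for $y$, use of the consistency condition $\mathbf{1}^T(\alpha+\beta)=1$ to handle both the $x^*$ terms and the normalization $p_N(1)=1$, and the assumption $\alpha_N^{(N)}\neq 0$ to pin down the exact degree. If anything, your write-up is slightly more careful than the paper's on two points the paper glosses over — making the strong induction hypothesis explicit (the paper states it only for $y_{i-1}$ but then invokes it for all $y_{j-1}$) and noting that no cancellation of the leading coefficient can occur since only the $j=N$ summand attains degree $N$.
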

\begin{proof}
    We prove \eqref{eq:polynomial_iteration} iteratively. Of course, at iteration zero,
    \[
        y_0 = x^* + 1 \cdot (x_0 - x^*),
    \]
    and $1$ is indeed  polynomial of degree zero whose coefficient sum to one. Now, assume
    \[
        y_{i-1} - x^* = p_{i-1}(G)(x_0-x^*), \quad p_{i-1} \in \pim.
    \]
    We show that
    \[
        y_{i} - x^* = p_{i}(G)(x_0-x^*), \quad p_{i} \in \pip.
    \]
    By definition of $y_{i}$ in~\eqref{eq:general_iteration},
    \[
        y_{i} - x^* = \textstyle \sum_{j=1}^i \alpha_j^{(i)} x_j + \beta_j^{(i)} y_{i-1} - x^*,
    \]
    where $(\alpha + \beta)^T\textbf{1} = 1$. This also means that
    \[
        y_{i} - x^* = \textstyle \sum_{j=1}^i \alpha_j^{(i)} (x_j-x^*) + \beta_j^{(i)} (y_{j-1}-x^*) .
    \]
    By definition, $x_{j} -x^* = G(y_{j-1} - x^*) $, so
    \[
        y_{i} - x^* = \textstyle \sum_{j=1}^i \left(\alpha_j^{(i)} G  + \beta_j^{(i)} I\right) (y_{j-1}-x^*) .
    \]
    By the recurrence assumption,
    \[
        y_{i} - x^* = \textstyle \sum_{j=1}^i \left(\alpha_j^{(i)} G  + \beta_j^{(i)} I\right) p_{j-1}(G) (x_0-x^*),
    \]
    which is a linear combination of polynomials, thus $y_{i} - x^* = p(G)(x_0-x^*)$. It remains to show that $p \in \pip$. Indeed,
    \[
        \deg (p) = \max_j   \max \left\{(1+\deg (p_{j-1}(G))) 1_{\alpha_j \neq 0},\;\; \deg (p_{j-1}(G)) 1_{\beta_j \neq 0}\right\},
    \]
    where $1_{\alpha_j \neq 0} = 1$ if $\alpha_j \neq 0$ and $0$ otherwise. By assumption, $\alpha_i \neq 0$ thus
    \[
        \deg (p) \geq 1+\deg (p_{i-1}(G)) = i.
    \]
    Since $p$ is a  linear combination of polynomials of degree at most $i$, \[
        \deg (p) = i.
    \]
    It remains to show that $p(1)=1$. Indeed,
    \[
        p(1) = \sum_{j=1}^i \left(\alpha_j^{(i)} 1  + \beta_j^{(i)} \right) p_{j-1}(1).
    \]
    Since $\left(\alpha_j^{(i)} 1  + \beta_j^{(i)} \right) = 1$ and $p_{j-1}(1) = 1$, $p(1) = 1$ and this proves the proposition.
\end{proof}

\subsection{Regularized Nonlinear Acceleration Scheme}
We now propose a modification of RNA that can accelerate any algorithm of the form \eqref{eq:general_iteration} by combining the approaches of \cite{anderson1965iterative} and \cite{scieur2016regularized}. We introduce a mixing parameter~$\eta$, as in Anderson acceleration (which only impact the constant term in the rate of convergence). Throughout this paper, \textbf{RNA} will refer to Algorithm \ref{algo:rna} below. 

\begin{algorithm}[htb]
\caption{Regularized Nonlinear Acceleration (\textbf{RNA})}
\label{algo:rna}
\begin{algorithmic}[1]
   \STATE {\bfseries Data:} Matrices $X$ and $Y$ of size $d\times N$ constructed from the iterates as in~\eqref{eq:general_iteration} and~\eqref{eq:def_xy}.
   \STATE {\bfseries Parameters:} Mixing $\eta\neq 0$, regularization $\lambda \geq 0$.\\
   \hrulefill
   \STATE \textbf{1.} Compute matrix of residuals $R = X-Y$.
   \STATE \textbf{2.} Solve
   \BEQ
        \cl = \frac{(R^TR+(\lambda\|R\|^2_2) I)^{-1} \textbf{1}_N}{\textbf{1}_N^T(R^TR+(\lambda\|R\|^2_2) I)^{-1}\textbf{1}_N}. \label{eq:cl}
    \EEQ
    \STATE \textbf{3.} Compute extrapolated solution $\yex = (Y-\eta R)\cl$.
\end{algorithmic}
\end{algorithm}

\subsection{Computational Complexity} 

\citet{scieur2016regularized} discuss the complexity of Algorithm \ref{algo:rna} in the case where $N$ is small (compared to $d$). When the algorithm is used once on $X$ and $Y$ (batch acceleration), the computational complexity is $O(N^2 d)$, because we have to multiply $R^T$ and $R$. However, when Algorithm \eqref{algo:rna} accelerates iterates on-the-fly, the matrix $R^TR$ can be updated using only $O(Nd)$ operations. The complexity of solving the linear system is negligible as it takes only $O(N^3)$ operation. Even if the cubic dependence is bad for large $N$, in our experiment $N$ is typically equal to 10, thus adding a negligible computational overhead compared to the computation of a gradient in large dimension which is higher by orders.

\subsection{Convergence Rate}
We now analyze the convergence rate of Algorithm \ref{algo:rna} with $\lambda = 0$, which corresponds to Anderson acceleration \citep{anderson1965iterative}. In particular, we show its optimal rate of convergence when $g$ is a linear function. In the context of optimization, this is equivalent to the application of gradient descent for minimizing quadratics. Using this special structure, the iterations~\eqref{eq:polynomial_iteration} produce a sequence of polynomials and the next theorem uses this special property to bound the convergence rate. Compared to previous work in this vein \citep{scieur2016regularized,scieur2017nonlinear} where the results only apply to algorithm of the form $x_{i+1} = g(x_i)$, this theorem applies to \textit{any} algorithm of the class \eqref{eq:general_iteration} where in particular, we allow $G$ to be nonsymmetric.

\begin{theorem} \label{thm:optimal_rate}
    Let $X$, $Y$ in~\eqref{eq:def_xy} be formed using iterates from \eqref{eq:general_iteration}. Let $g$ be defined in~\eqref{eq:linear_g}, where $G\in \mathbb{R}^{d\times d}$ does not have $1$ as eigenvalue. The norm of the residual of the extrapolated solution $\yex$, written
    \[
        r(\yex) = g(\yex) - \yex,
    \]
    produced by Algorithm \ref{algo:rna} with $\lambda = 0$,  is bounded by
    \[
        \|r(\yex)\|_2 \leq \| I-\eta (G-I)  \|_2 ~\| p^*_{N-1}(G)r(x_0) \|_2,
    \]
    where $p^*_{N-1}$ solves
    \BEQ
        \textstyle p^*_{N-1} = \argmin_{p\in \pnm} \| p(G)r(x_0)  \|_2. \label{eq:minimal_polynomial}
    \EEQ
    Moreover, after at most $d$ iterations, the algorithm converges to the exact solution, satisfying $\|r(\yex)\|_2 = 0$. 
\end{theorem}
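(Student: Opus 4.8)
The plan is to reduce the matrix formula for $\yex$ to a minimal-polynomial problem in $G$, exploiting the fact that every quantity in sight is a polynomial in $G$ applied to a fixed vector, so that all the relevant matrices commute. First I would record the variational meaning of the weights: for $\lambda=0$, the vector $\cl$ returned by \eqref{eq:cl} is exactly the minimizer of $\|Rc\|_2$ over the affine set $\{c:\mathbf{1}_N^T c = 1\}$, since \eqref{eq:cl} is precisely the KKT system of that equality-constrained least-squares problem.

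Next I would translate $R\cl$ into polynomial language. By Proposition~\ref{prop:poly_iter}, $y_{i-1}-x^* = p_{i-1}(G)(x_0-x^*)$ with $p_{i-1}\in\pim$, and since $r(x)=(G-I)(x-x^*)$ the columns of $R$ satisfy $r_i=(G-I)p_{i-1}(G)(x_0-x^*)$. Setting $q=\sum_{i=1}^N c_i\,p_{i-1}$ and using $r(x_0)=(G-I)(x_0-x^*)$ together with commutativity of polynomials in $G$ gives the clean identity $Rc = q(G)\,r(x_0)$. Because the $p_{i-1}$ have degrees $0,1,\dots,N-1$ exactly, they form a basis of the polynomials of degree at most $N-1$, and the consistency condition $p_{i-1}(1)=1$ makes $\mathbf{1}_N^Tc=1$ equivalent to $q(1)=1$; hence as $c$ runs over the feasible set, $q$ runs over all polynomials of degree at most $N-1$ with $q(1)=1$. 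In particular the optimal value satisfies $\|R\cl\|_2 = \min_q\|q(G)r(x_0)\|_2 \le \|p^*_{N-1}(G)r(x_0)\|_2$, the last inequality holding because $\pnm$ is a subset of $\{q:\deg q\le N-1,\ q(1)=1\}$, so the minimum over the larger class is no larger than the value at the minimizer $p^*_{N-1}$.

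I would then handle the mixing term directly: since $\mathbf{1}_N^T\cl=1$, one computes $\yex-x^* = Y\cl-x^*-\eta R\cl = q(G)(x_0-x^*)-\eta(G-I)q(G)(x_0-x^*)$, and applying $r(\yex)=(G-I)(\yex-x^*)$ with commutativity yields the factorization $r(\yex) = \big(I-\eta(G-I)\big)R\cl$. Submultiplicativity of the spectral norm then gives $\|r(\yex)\|_2\le \|I-\eta(G-I)\|_2\,\|R\cl\|_2$, and combining with the previous bound produces the stated estimate. For finite termination I would invoke Cayley--Hamilton: the characteristic polynomial $\chi$ of $G$ has degree $d$, annihilates $G$, and satisfies $\chi(1)\neq0$ because $1$ is not an eigenvalue; thus $p=\chi/\chi(1)$ lies in the feasible polynomial class and kills $r(x_0)$, so once the degree budget $N-1$ reaches $d$ the minimum above is zero, forcing $R\cl=0$ and hence $r(\yex)=0$.

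The main obstacle I anticipate is the bookkeeping in the polynomial reduction: one must check carefully that $c\mapsto q$ is a bijection onto the degree-$\le(N-1)$ affine class (using both that $\deg p_{i-1}=i-1$ exactly and that $p_{i-1}(1)=1$), and that replacing this class by the degree-exactly-$(N-1)$ set $\pnm$ gives the inequality in the correct direction. A secondary subtlety is that at the very iteration where $R\cl=0$ the residuals become linearly dependent and $R^TR$ is singular, so the $\lambda=0$ formula \eqref{eq:cl} should there be read as the least-norm minimizer of $\|Rc\|_2$; the variational characterization from the first step is precisely what makes the finite-termination claim rigorous in that degenerate case.
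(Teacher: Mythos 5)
Your proposal is correct and follows essentially the same route as the paper's proof: the same variational reading of \eqref{eq:cl} at $\lambda=0$, the same reduction via Proposition~\ref{prop:poly_iter} to $R\cl = q(G)r(x_0)$ with $q$ ranging over normalized polynomials of degree at most $N-1$, the same factorization $r(\yex) = \left(I-\eta(G-I)\right)R\cl$, and an annihilating-polynomial argument for finite termination (you use Cayley--Hamilton where the paper uses the minimal polynomial of $G$, a negligible difference). If anything, your version is slightly more careful than the paper's on two points it glosses over: the direction of the inequality when passing from the degree-at-most-$(N-1)$ class to $\pnm$, and the degenerate case where $R^TR$ is singular at termination.
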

\begin{proof}
    First, we write the definition of $\yex$ from Algorithm \ref{algo:rna} when $\lambda = 0$,
    \[
        \yex -x^* = (Y-\eta R) c - x^*.
    \]
    Since $c^T\textbf{1} = 1$, we have $X^*c = x^*$, where $X^* = [x^*,x^*,\ldots, x^*,]$. Thus,
    \[
        \yex -x^* = (Y-X^*-\eta R) c.
    \]
    Since $R = G(Y-X^*)$,
    \[
        \yex - x^* = (I-\eta (G-I))(Y-X^*) c .
    \]
    We have seen that the columns of $Y-X^*$ are polynomials of different degrees, whose coefficients sums to one \eqref{eq:polynomial_iteration}. This means
    \[
        \yex - x^* = (I-\eta (G-I))\sum_{i=0}^{N-1} c_i p_i(G)(x_0-x^*). 
    \]
    In addition, its residual reads
    \BEAS
        r(\yex) & = & (G-I)(\yex - x^*) \\
        & = & (G-I)(I-\eta (G-I))\sum_{i=0}^{N-1} p_i(G)(x_0-x^*) \\
        & = & (I-\eta (G-I)\sum_{i=0}^{N-1} c_i p_i(G)r(x_0).
    \EEAS
    Its norm is thus bounded by
    \[
        \|r(\yex)\| \leq \|I-\eta (G-I)\| \|\underbrace{\sum_{i=0}^{N-1} c_i p_i(G)r(x_0)}_{=Rc}\|.
    \]
    By definition of $c$ from Algorithm \ref{algo:rna},
    \[
        \|r(\yex)\| \leq \|I-\eta (G-I)\| \min_{c:c^T\textbf{1} = 1} \|\sum_{i=0}^{N-1} c_i p_i(G)r(x_0)\|.
    \]
    Because $p_i$ are all of degree exactly equal to $i$, the $p_i$ are a basis of the set of all polynomial of degree at most $N-1$. In addition, because $p_i(1) = 1$, restricting the sum of coefficients $c_i$ to $1$ generates the set $\pnm$. We have thus
    \[
        \|r(\yex)\| \leq \|I-\eta (G-I)\| \min_{p\in \pnm } \|p(G)r_0\|.
    \]
    Finally, when $N>d$, it suffice to take the minimal polynomial of the matrix $G$ named $p_{\min,G}$, whose coefficient are normalized by $p_{\min,G}(1)$. Since the eigenvalues of $G$ are strictly inferior to $1$, $p_{\min,G}(1)$ cannot be zero. 
\end{proof}

In optimization, the quantity $\|r(\yex)\|_2$ is proportional to the norm of the gradient of the objective function computed at $\yex$. This last theorem reduces the analysis of the rate of convergence of RNA to the analysis of the quantity \eqref{eq:minimal_polynomial}. In the symmetric case discussed in~\citep{scieur2016regularized}, this bound recovers the optimal rate in \citep{nesterov2013introductory} which also appears in the complexity analysis of Krylov methods (like GMRES or conjugate gradients \citep{golub1961chebyshev,golub2012matrix}) for quadratic minimization.

\section{Crouzeix's Conjecture \& Chebyshev Polynomials on the Numerical Range}\label{s:crouzeix}
We have seen in~\eqref{eq:minimal_polynomial} from Theorem~\ref{thm:optimal_rate} that the convergence rate of nonlinear acceleration is controlled by the norm of a matrix polynomial in the operator $G$, with  
\[
    \|r(\yex)\|_2 \leq \| I-\eta (G-I)  \|_2 ~\| p^*_{N-1}(G)r(x_0) \|_2,
\]
where $r(\yex)=\yex - g(\yex)$ and $p^*_{N-1}$ solves
\[
    p^*_{N-1} = \argmin_{p\in \pnm} \| p(G)r(x_0)  \|_2. 
\]
The results in~\citep{scieur2016regularized} recalled above handle the case where the operator $G$ is {\em symmetric}. Bounding $\|p(G)\|_2$ when $G$ is non-symmetric is much more difficult. Fortunately, Crouzeix's conjecture \citep{Crou04} allows us to bound $\|p(G)\|_2$ by solving a Chebyshev problem on the numerical range of $G$, in the complex plane. 

\begin{theorem}[\citet{Crou04}]
Let $G\in\complexs^{n\times n}$, and $p(x)\in \complexs[x]$, we have
\[
\|p(G)\|_2 \leq c\max_{z \in W(G)} |p(z)|
\]
for some absolute constant $c\geq 2$.
\end{theorem}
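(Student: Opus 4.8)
The plan is to prove the upper bound by establishing that the numerical range $W(G)$ is a $c$-spectral set for $G$ with a finite absolute constant $c$, following \citet{Crou07} and the sharper argument of \citet{Crou17,Gree17}; the lower bound $c\ge 2$ is then recorded by an explicit example. I would target the cleaner route yielding $c=1+\sqrt 2$ rather than the original $c\approx 11.08$. Two structural facts are fixed at the outset: $W(G)$ is compact and \emph{convex} by the Toeplitz--Hausdorff theorem, and it contains the spectrum of $G$, so the holomorphic functional calculus applies on a neighborhood of $\Omega:=W(G)$. After a routine reduction to the case where $\partial\Omega$ is smooth and $p$ is holomorphic on a neighborhood of $\Omega$ (by approximating $\Omega$ from outside by smooth convex domains and passing to the limit), everything is expressed through Cauchy's formula
\[
    p(G)=\frac{1}{2\pi i}\oint_{\partial\Omega} p(z)\,(zI-G)^{-1}\,dz .
\]

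The single resolvent estimate that powers the whole argument is that, for $z\notin\Omega$,
\[
    \|(zI-G)^{-1}\|_2\le \frac{1}{\mathrm{dist}(z,\Omega)},
\]
which is immediate from the definition of $W(G)$: for any $x$, $\|(zI-G)x\|\,\|x\|\ge|\langle(zI-G)x,x\rangle|\ge\mathrm{dist}(z,W(G))\,\|x\|^2$, since $\langle Gx,x\rangle/\|x\|^2\in W(G)$. With this in place the key steps are: (i) introduce the Cauchy transform $\tilde p$ of the boundary values $\overline{p}$ on $\partial\Omega$, which is holomorphic inside $\Omega$; (ii) use the Plemelj--Sokhotski jump relations to relate the boundary traces of $p$, $\tilde p$ and $\overline{\tilde p}$; (iii) convert the pointwise boundary relation into an operator identity linking $p(G)$ and $\tilde p(G)^*$, and prove the positivity statement that makes the associated double-layer (Neumann--Poincar\'e) boundary operator a contraction, giving a bound of the form $\|p(G)\|_2\le \|p\|_{\partial\Omega}+\|\tilde p\|_{\partial\Omega}$; and (iv) bound the Cauchy transform by $\|\tilde p\|_{\partial\Omega}\le \sqrt 2\,\|p\|_{\partial\Omega}$, an estimate valid precisely because $\Omega$ is convex. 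Chaining (iii) and (iv) yields $\|p(G)\|_2\le(1+\sqrt 2)\max_{z\in W(G)}|p(z)|$, which is of the asserted form. For the complementary lower bound $c\ge 2$, I would take $p(z)=z$ and $G=\left[\begin{smallmatrix}0&2\\0&0\end{smallmatrix}\right]$, whose numerical range is the closed unit disk: here $\max_{W(G)}|p|=1$ while $\|p(G)\|_2=\|G\|_2=2$.

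The main obstacle is step (iii): proving that convexity of $W(G)$ forces the relevant boundary integral operator to be a contraction (equivalently, that a certain self-adjoint part has the correct sign). This is exactly the point where inclusion of the spectrum alone would be useless and the full convexity of the numerical range is indispensable --- for $z\in\partial\Omega$ the sign of $\mathrm{Re}\,\langle(zI-G)^{-1}v,n(z)\rangle$ against the outward normal $n(z)$ is dictated by the supporting-hyperplane property of the convex set $\Omega$, and upgrading this pointwise sign condition to a uniform operator-norm bound is the delicate analytic heart of the proof and the source of the constant. The remaining work --- the smoothing of $\Omega$ and the limiting argument in step (i), together with the Cauchy-transform estimate in (iv) --- I expect to be essentially technical bookkeeping. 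I would also flag that any argument along these lines only delivers a finite $c$ (here $1+\sqrt 2$); the assertion that the \emph{optimal} constant is exactly $2$ is Crouzeix's conjecture \citep{Crou04} and remains open, so one cannot hope to push this proof down to $c=2$.
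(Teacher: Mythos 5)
You should first note a mismatch of situation: the paper never proves this statement. It is imported verbatim from the literature (the conjecture and the first finite constant from \citet{Crou04,Crou07}, the constant $1+\sqrt 2$ from \citet{Crou17}), so your attempt has to be measured against those cited proofs rather than any argument in the paper. The pieces you actually execute are correct: the resolvent bound $\|(zI-G)^{-1}\|_2\le 1/\mathrm{dist}(z,W(G))$ with its one-line proof via $|\langle (zI-G)x,x\rangle|\ge \mathrm{dist}(z,W(G))\|x\|^2$ is exactly right (and is indeed the only place the definition of $W(G)$ enters); the smoothing/approximation remarks are routine as you say; and the lower bound is right, since $G=\left[\begin{smallmatrix}0&2\\0&0\end{smallmatrix}\right]$ has $W(G)$ equal to the closed unit disk while $\|G\|_2=2$, so no constant below $2$ can work.

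The genuine gap is in steps (iii)--(iv), which are the entire mathematical content of the theorem, and the specific inequalities you propose there are not ones you could prove. What positivity of the double-layer (Neumann--Poincar\'e) kernel on a convex boundary gives is Crouzeix--Palencia's key lemma $\|p(G)+\tilde p(G)^*\|_2\le 2\max_{\partial\Omega}|p|$ --- note the adjoint and the factor $2$, the total mass of the kernel. To pass from this to a bound on $\|p(G)\|_2$ alone you must control the \emph{operator} norm $\|\tilde p(G)\|_2$; your step (iii) instead writes $\|p(G)\|_2\le\|p\|_{\partial\Omega}+\|\tilde p\|_{\partial\Omega}$, which silently replaces $\|\tilde p(G)\|_2$ by the \emph{sup} norm of $\tilde p$ on $\partial\Omega$ --- i.e., it assumes the very theorem being proved (with constant $1$, no less) for the function $\tilde p$. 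This circularity is precisely what makes the result hard; the escape in \citet{Crou17} is a further argument of a different shape, exploiting holomorphy of products (so that $\tilde p(G)p(G)=(\tilde p\,p)(G)$) to close a quadratic inequality in $\|p(G)\|_2$ whose positive root is $1+\sqrt2$ --- the constant does not arise additively as $1+\sqrt 2$ from two separate estimates. Relatedly, your step (iv), a $\sqrt2$ sup-norm bound for the Cauchy transform over convex domains, is not an ingredient of either published proof and I see no route to it (on the disk the Cauchy transform of $\bar p$ is the constant $\overline{p(0)}$, so the natural constant there is $1$). In short: the objects are the right ones, the easy steps are done correctly, but the two decisive inequalities are respectively circular and unsupported, so the proposal does not constitute a proof.
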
\label{th:crouzeix}
Here $W(G)\subset \complexs$ is the numerical range of the matrix $G\in\reals^{n\times n}$, i.e. the range of the Rayleigh quotient
\BEQ\label{eq:numrange}
W(G) \triangleq \left\{ x^*Gx: \|x\|_2=1, x \in \complexs^n \right\}.
\EEQ
\citep{Crou07} shows $c\leq 11.08$ and Crouzeix's conjecture states that this can be further improved to $c=2$, which is tight. A more recent bound in \citep{Crou17} yields $c=1+\sqrt{2}$ and there is significant numerical evidence in support of the $c=2$ conjecture \citep{Gree17}. This conjecture has played a vital role in providing convergence results for e.g. the GMRES method \citep{saad1986gmres,choi2015roots}. 

Crouzeix's result allows us to turn the problem of finding uniform bounds for the norm of the matrix polynomial $\|p(G)\|_2$ to that of bounding $p(z)$ over the numerical range of $G$ in the complex plane, an arguably much simpler two-dimensional Chebyshev problem.

\subsection{Numerical Range Approximations}
The previous result links the convergence rate of accelerated algorithms with the optimum value of a Chebyshev problem over the numerical range of the operator $G$ and we now recall classical methods for computing the numerical range. There are no generic tractable methods for computing the exact numerical range of an operator $G$. However, efficient numerical methods approximate the numerical range based on key structural properties. The Toeplitz-Hausdorff theorem \citep{hausdorff1919wertvorrat, toeplitz1918algebraische} in particular states that the numerical range $W(G)$ is a closed convex bounded set. Therefore, it suffices to characterize points on the boundary, the convex hull then yields the numerical range. 

\citet{johnson1978numerical} made the following observations using the properties of the numerical range,
\begin{align}
    \max_{z \in W(G)} Re(z) &= \max_{r \in W(H(G))} r= \lambda_{max}(H(G)) \label{eq: maxrealval}\\
    W(e^{i\theta}G) &= e^{i\theta} W(G),\quad  \forall \theta \in [0, 2\pi), \quad \label{eq: fieldvaluesrotation}
\end{align}
where $Re(z)$ is the real part of complex number $z$, $H(G)$ is the Hermitian part of $G$, i.e. $ H(G) = ({G + G^*})/{2}$
and $\lambda_{max}(H(G))$ is the maximum eigenvalue of $H(G)$. The first property implies that the line parallel to the imaginary axis is tangent to $W(G)$ at $\lambda_{max}(H(G))$. The second property can be used to determine other tangents via rotations. Using these observations \cite{johnson1978numerical} showed that the points on the boundary of the numerical range can be characterized as 
$
    p_\theta =\{v_\theta^*Gv_\theta : \theta \in [0, 2\pi)\}
$
 where $v_\theta$ is the normalized eigenvector corresponding to the largest eigenvalue of the Hermitian matrix
\begin{equation}
    H_\theta = \frac{1}{2}(e^{i\theta}G + e^{-i\theta}G^*)
\end{equation}
The numerical range can thus be characterized as follows.

\begin{theorem} \citep{johnson1978numerical} For any $G\in\complexs^{n\times n}$, we have
\[
 W(G) = Co\{p_\theta  : 0\leq \theta < 2\pi\}
\]
where $Co\{Z\}$ is the convex hull of the set $Z$. 
\end{theorem}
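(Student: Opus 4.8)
The plan is to exploit the fact that a closed, convex, bounded subset of the plane is completely determined by its support function, i.e. it equals the intersection of all the closed half-planes defined by its supporting lines. I would compute the support function of $W(G)$ explicitly using the two observations \eqref{eq: maxrealval} and \eqref{eq: fieldvaluesrotation}, and show that the supporting line in every direction touches $W(G)$ at one of the points $p_\theta$.

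For the easy inclusion $\supseteq$, each $p_\theta = v_\theta^* G v_\theta$ is a Rayleigh quotient of $G$ with $\|v_\theta\|_2 = 1$, hence $p_\theta\in W(G)$ by the definition \eqref{eq:numrange}. Since $W(G)$ is convex by the Toeplitz--Hausdorff theorem, $Co\{p_\theta : 0\le\theta<2\pi\}\subseteq W(G)$. The work is in the reverse inclusion, which I would handle through the support function. Fix $\theta$ and set $h(\theta) \triangleq \max_{z\in W(G)} Re(e^{i\theta} z)$. Rotating via \eqref{eq: fieldvaluesrotation} gives $e^{i\theta} W(G) = W(e^{i\theta}G)$, so $h(\theta) = \max_{w\in W(e^{i\theta}G)} Re(w)$. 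Applying \eqref{eq: maxrealval} to the matrix $e^{i\theta}G$ and noting that $H(e^{i\theta}G) = \tfrac12(e^{i\theta}G + e^{-i\theta}G^*) = H_\theta$, I obtain $h(\theta) = \lambda_{max}(H_\theta)$. Moreover the maximum over unit vectors of $Re\bigl(x^*(e^{i\theta}G)x\bigr) = x^* H_\theta x$ is attained exactly at the top eigenvector $x = v_\theta$ of $H_\theta$, and the corresponding point of $W(G)$ is $v_\theta^* G v_\theta = p_\theta$, which therefore satisfies $Re(e^{i\theta} p_\theta) = \lambda_{max}(H_\theta) = h(\theta)$. In other words, $p_\theta$ lies on the supporting line of $W(G)$ with outer normal $e^{-i\theta}$.

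To conclude, let $P$ denote the closed convex hull $Co\{p_\theta : 0\le\theta<2\pi\}$; by the first inclusion it is a closed convex set contained in $W(G)$, so its support function obeys $h_P(\theta) \le h(\theta)$. On the other hand $p_\theta\in P$ forces $h_P(\theta) \ge Re(e^{i\theta} p_\theta) = h(\theta)$, whence $h_P = h$ for every $\theta$. Since two closed convex sets with identical support functions coincide, $P = W(G)$, which is the claimed identity.

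The only delicate point, and the step I expect to require care, is the compactness/closedness of $Co\{p_\theta\}$: the map $\theta\mapsto v_\theta$ need not be continuous when the largest eigenvalue of $H_\theta$ has multiplicity greater than one, so the set $\{p_\theta\}$ may fail to be closed a priori. This is precisely why I would phrase the argument in terms of the support function $h(\theta)$ and pass to the closed convex hull, rather than attempt to track the boundary points continuously; the support-function uniqueness theorem then delivers equality without needing continuity of $\theta\mapsto p_\theta$, and boundedness of $\{p_\theta\}\subseteq W(G)$ guarantees the resulting hull is compact.
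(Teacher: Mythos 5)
First, a point of reference: the paper does not prove this theorem at all --- it is quoted from \citet{johnson1978numerical}, with the observations \eqref{eq: maxrealval} and \eqref{eq: fieldvaluesrotation} stated beforehand as ingredients --- so there is no internal proof to compare yours against, and your proposal must be judged on its own. On its own terms it is sound for what it proves: the inclusion $Co\{p_\theta\}\subseteq W(G)$ via Toeplitz--Hausdorff, the identity $h(\theta)=\lambda_{\max}(H_\theta)$ obtained by applying \eqref{eq: maxrealval} to the matrix $e^{i\theta}G$ and rotating via \eqref{eq: fieldvaluesrotation}, the fact that $Re(e^{i\theta}p_\theta)=v_\theta^*H_\theta v_\theta=\lambda_{\max}(H_\theta)$ for \emph{any} unit top eigenvector $v_\theta$, and the conclusion by uniqueness of support functions of closed convex sets are all correct. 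This is essentially the natural argument behind Johnson's result.

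Be aware, though, that what you have proved is $W(G)=\overline{Co}\{p_\theta\}$ (the \emph{closed} convex hull), while the statement asserts equality with the plain hull, and the difficulty you flag at the end is genuine rather than merely delicate: with a single arbitrary eigenvector choice per angle, the plain hull can be a proper, non-closed subset of $W(G)$. Concretely, let $G=A\oplus B$ where $W(A)$ and $W(B)$ are disjoint unit disks centered at $\mp w$ (take $2\times 2$ upper-triangular blocks with equal diagonal entries), so that $W(G)$ is the convex hull of two disks, a ``stadium'' with two flat horizontal faces. At the two angles $\theta$ where $e^{-i\theta}=\pm i$, the top eigenvalue of $H_\theta$ has multiplicity two; choosing $v_\theta$ to mix the two blocks equally makes $p_\theta$ the midpoint of the corresponding flat face, and then the four corners of the stadium are extreme points of $W(G)$ that equal no $p_{\theta'}$, hence lie outside the plain convex hull. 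To obtain the literal statement one must read $p_\theta$ as set-valued, i.e.\ all values $v^*Gv$ with $v$ a unit vector in the full top eigenspace of $H_\theta$ (this is arguably how the paper's definition of $p_\theta$, written with set braces, should be parsed; the paper's remark that the hull is independent of the eigenvector choice is only true under this reading, or after taking closures). Your argument then upgrades to the plain hull with one extra observation: for a unit vector $x$, $x^*H_\theta x=\lambda_{\max}(H_\theta)$ forces $x$ to lie in that eigenspace, so the face $\{z\in W(G): Re(e^{i\theta}z)=h(\theta)\}$ consists exactly of such Rayleigh quotients; since every boundary point of $W(G)$ lies on some supporting line, $\partial W(G)\subseteq\{p_\theta\}$, and therefore $W(G)=Co(\partial W(G))\subseteq Co\{p_\theta\}\subseteq W(G)$, with no closure needed.
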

Note that $p_\theta$ cannot be uniquely determined as the eigenvectors $v_\theta$ may not be unique but the convex hull above is uniquely determined. 

\subsection{Chebyshev Bounds \& Convergence Rate}
Crouzeix's result means that bounding the convergence rate of accelerated algorithms can be achieved by bounding the optimum of the Chebyshev problem 
\BEQ\label{eq:cheb-C}
\min_{\substack{p \in\complexs[z]\\p(1)=1}} ~~ \max_{z\in W(G)} |p(z)|
\EEQ
where $G\in\complexs^{n \times n}$. This problem has a trivial answer when the numerical range $W(G)$ is spherical, but the convergence rate can be significantly improved when $W(G)$ is less isotropic. 

\subsubsection{Exact Bounds on Ellipsoids}
We can use an outer ellipsoidal approximation of $W(G)$, bounding the optimum value of the Chebyshev problem~\eqref{eq:cheb-C} by
\BEQ\label{eq:ChebE}
\min_{\substack{p(z)\in\complexs[x]\\p(1)=1}} ~~ \max_{z\in\mathcal{E}_r} |p(z)|
\EEQ
where
\BEQ\label{eq:Er}
\mathcal{E}_r\triangleq \{z\in\complexs:|z-1|+|z+1| \leq r+ 1/r\}.
\EEQ
This Chebyshev problem has an explicit solution in certain regimes. As in the real case, we will use $C_n(z)$, the Chebyshev polynomial of degree $k$.
\citet{Fisc91} show the following result on the optimal solution to problem ~\eqref{eq:ChebE} on ellipsoids. 

\begin{theorem}\citep[Th.\,2]{Fisc91}\label{th:fish}
Let $k\geq 5$, $r>1$ and $c \in \reals$. The polynomial 
\[
    T_{k,\kappa}(z)=T_k(z)/T_k(1-\kappa) 
\]
where 
\[
T_k(z)= \frac{1}{2}\left(v^k + \frac{1}{v^k}\right), \quad v =\frac{1}{2}\left( z + \frac{1}{z}\right)
\]
is the unique solution of problem~\eqref{eq:ChebE} if either 
\[
|1-\kappa| \geq \frac{1}{2}\left(r^{\sqrt{2}} + r^{-\sqrt{2}}\right) 
\]
or
\[
|1-\kappa| \geq \frac{1}{2 a_r}\left(2a_r^2 - 1 + \sqrt{2a_r^4-a_r^2+1}\right)
\]
where $a_r=(r+1/r)/2.$
\end{theorem}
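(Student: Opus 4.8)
The plan is to read \eqref{eq:ChebE} as a complex Chebyshev (best uniform approximation) problem on the filled ellipse $\mathcal{E}_r$ of \eqref{eq:Er}, with the \emph{real} normalization point $1-\kappa$ sitting outside it, and to certify that the scaled Chebyshev polynomial $T_{k,\kappa}$ is optimal through a Kolmogorov-type extremal-signature argument. First I would parametrize the geometry conformally: under the Joukowski map $z=\tfrac12(w+w^{-1})$, the circle $|w|=r$ maps onto $\partial\mathcal{E}_r$ and $T_k(z)=\tfrac12(w^k+w^{-k})$. By the maximum modulus principle the supremum of $|p|$ over the filled region is attained on $\partial\mathcal{E}_r$, so I only work on $|w|=r$. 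Writing $w=re^{i\theta}$ gives $|T_k|^2=\tfrac14(r^{2k}+r^{-2k}+2\cos 2k\theta)$, whence $\max_{z\in\mathcal{E}_r}|T_k|=\tfrac12(r^k+r^{-k})$, attained at the $2k$ equioscillation points $\theta_j=j\pi/k$. Dividing by $T_k(1-\kappa)$ makes the candidate feasible (it equals $1$ at $1-\kappa$) and fixes its objective value at $\tfrac12(r^k+r^{-k})/|T_k(1-\kappa)|$, with an explicitly known phase structure on the extremal set $E=\{z_j\}$.

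The substance is the matching lower bound. The right tool is the Kolmogorov characterization of best approximation under a single linear interpolation constraint: $T_{k,\kappa}$ is optimal iff no admissible perturbation $q$, a polynomial with $\deg q\leq k$ and $q(1-\kappa)=0$, satisfies $\mathrm{Re}\!\left(\overline{T_{k,\kappa}(z)}\,q(z)\right)<0$ simultaneously at every $z\in E$. Dually, this is equivalent to exhibiting nonnegative weights $\mu_j$ on $E$, not all zero, with $\sum_j \mu_j \overline{T_{k,\kappa}(z_j)}\,q(z_j)=0$ for all such $q$. Factoring $q(z)=(z-(1-\kappa))s(z)$ with $\deg s\leq k-1$ reduces this to the $k$ moment conditions $\sum_j \mu_j \overline{T_{k,\kappa}(z_j)}\,(z_j-(1-\kappa))\,z_j^m=0$, $m=0,\dots,k-1$, a finite feasibility problem whose solvability depends only on the position of $1-\kappa$ relative to the $z_j$.

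The main obstacle — and the source of the two stated inequalities — is verifying this signature condition uniformly in $k$. It holds precisely when $1-\kappa$ is far enough from $\mathcal{E}_r$ that the phases $\{T_{k,\kappa}(z_j)/|T_{k,\kappa}(z_j)|\}$ wrap around the origin enough to annihilate every feasible $q$; when $1-\kappa$ creeps too close, a strictly better polynomial exists and $T_{k,\kappa}$ loses optimality. I would translate the moment feasibility into a threshold $|1-\kappa|\geq\rho(r,k)$ and then, for $k\geq 5$, dominate $\rho(r,k)$ by the two $k$-independent bounds in the statement. The first, $\tfrac12\!\left(r^{\sqrt2}+r^{-\sqrt2}\right)$, should emerge from estimating the relevant level curve through the $w\mapsto w^{\sqrt2}$ rescaling (note this exceeds the major-semiaxis $a_r=\tfrac12(r+r^{-1})$, so it indeed forces $1-\kappa$ beyond the ellipse); the second, $\tfrac{1}{2a_r}\!\left(2a_r^2-1+\sqrt{2a_r^4-a_r^2+1}\right)$, should come from optimizing the quadratic that controls the nearest approach of that curve to the real axis, giving a sharper threshold in the complementary range of $r$.

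I expect the delicate point to be exactly the uniform control of the phase signature: the equioscillation points of $T_k$ are explicit, but certifying that their phases, weighted against the constraint direction at $1-\kappa$, admit nonnegative moment weights requires a careful sign analysis that is only clean once $k\geq 5$, which is why the hypothesis appears. Uniqueness would then follow routinely, since once optimality is certified the objective is strictly convex along the finite-dimensional space of feasible directions, ruling out a second minimizer.
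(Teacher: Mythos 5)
First, a point of comparison: the paper does not prove Theorem~\ref{th:fish} at all --- it is imported verbatim from \citet{Fisc91} and used as a black box to bound the Chebyshev problem \eqref{eq:ChebE}. So there is no internal proof to measure your attempt against; the only fair comparison is with the argument in the cited source, whose overall machinery (maximum modulus principle to pass to $\partial\mathcal{E}_r$, the Joukowski parametrization $z=\tfrac12(w+w^{-1})$, the equioscillation set of $T_k$ on $|w|=r$, and a Kolmogorov-type / dual extremal-signature characterization of best constrained complex approximation) is indeed what your sketch describes. Your computation of the extremal set and of $\max_{z\in\mathcal{E}_r}|T_k|=\tfrac12\left(r^k+r^{-k}\right)$ is correct, and the reduction of the dual condition to moment feasibility after factoring $q(z)=(z-(1-\kappa))s(z)$ is the right formal skeleton.

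The genuine gap is that everything which constitutes the actual theorem is deferred rather than proved. The two explicit thresholds $\tfrac12\left(r^{\sqrt2}+r^{-\sqrt2}\right)$ and $\tfrac{1}{2a_r}\left(2a_r^2-1+\sqrt{2a_r^4-a_r^2+1}\right)$, and the hypothesis $k\geq 5$, are precisely the quantitative content of Fischer's result; your proposal says they ``should emerge'' from estimates you do not carry out, so no part of the stated sufficient conditions is ever derived. Worse, the one mechanism you do name for the first threshold --- a ``$w\mapsto w^{\sqrt2}$ rescaling'' --- cannot work as stated, since non-integer powers of $w$ do not correspond to polynomials in $z$, and the $\sqrt2$ in the cited source arises from explicit estimates on the dual feasibility inequalities, not from a conformal rescaling. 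Verifying the signature condition \emph{uniformly in $k$}, which you correctly identify as the crux, is exactly the hard technical work of the reference; restating it as a finite moment problem does not discharge it. Finally, your uniqueness argument is incorrect: the sup norm is convex but not strictly convex, so ``strict convexity of the objective along feasible directions'' does not rule out a second minimizer. Uniqueness in this setting comes from the structure of the extremal set (enough equioscillation points relative to the dimension of the constrained polynomial space, in the spirit of Haar-system arguments), which again requires the very analysis the sketch omits.
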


The optimal polynomial for a general ellipse $\mathcal{E}$ can be obtained by a simple change of variables. That is, the polynomial $\bar{T}_k(z)={T_k(\frac{c-z}{d})}/{T_k(\frac{c-1}{d})}$ is optimal for the problem \eqref{eq:ChebE} over any ellipse $\mathcal{E}$ with center $c$, focal distance $d$ and semi-major axis $a$. It can be easily seen that the maximum value is achieved at the point $a$ on the real axis. That is the solution to the min max problem is given by $\bar{T}_k(a)$. Figure \ref{fig:chebyshev_cont5} shows the surface of the optimal polynomial with degree~$5$ for $a=0.8, d=0.76$ and $c=0$.
\begin{figure}[h!t]
    \centering
    \includegraphics[width=0.47\textwidth]{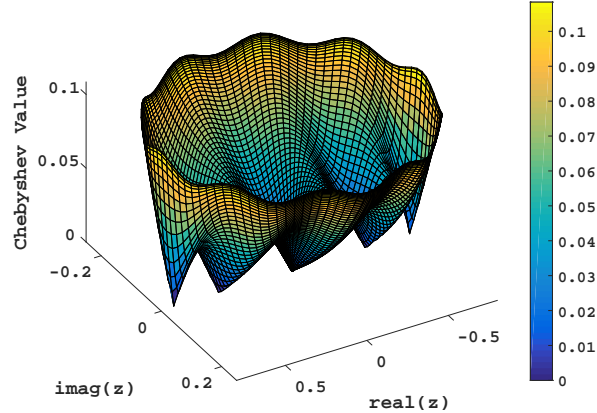}
    \caption{Surface of the optimal polynomial $\bar{T}_n(z)$ with degree $5$ for $a=0.8, d=0.76$ and $c=0$. }
    \label{fig:chebyshev_cont5}
\end{figure}

Figure \ref{fig:chebyshev_ecc5} shows the solutions to the problem \eqref{eq:ChebE} with degree $5$ for various ellipses with center at origin, various eccentricity values $e = d/a$ and semi-major axis $a$.
\begin{figure}[h!t]
    \centering
    \includegraphics[width=0.47\textwidth]{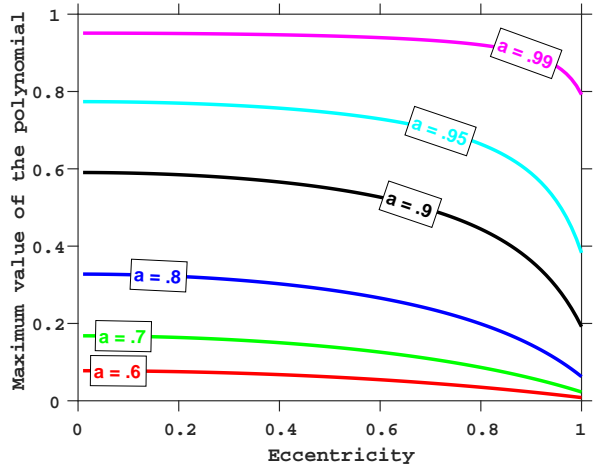}
    \caption{Optimal value of the Chebyshev problem \eqref{eq:ChebE} for ellipses with centers at origin. Lower values of the maximum of the Chebyshev problem mean faster convergence. The higher the eccentricity here, the faster the convergence.}
    \label{fig:chebyshev_ecc5}
\end{figure}
Here, zero eccentricity corresponds to a sphere, while an eccentricity of one corresponds to a line.




\section{Accelerating Non-symmetric Algorithms}\label{s:algos}
We have seen in the previous section that (asymptotically) controlling the convergence rate of the nonlinear acceleration scheme in Algorithm~\ref{algo:rna} for generic operators $G$ means bounding the optimal value of the Chebyshev optimization problem in~\eqref{eq:cheb-C} over the numerical range of the operator driving iterations near the optimum. In what follows, we explicitly detail this operator and approximate its numerical range for two classical algorithms, Nesterov's accelerated method \citep{Nest83} and Chambolle-Pock's Primal-Dual Algorithm \citep{chambolle2011first}. We focus on quadratic optimization below. We will see later in Section~\ref{s:nonlin} that, asymptotically at least, the behavior of acceleration on generic problems can be analyzed as a perturbation of the quadratic case.

\subsection{Nesterov's Accelerated Gradient Method}
The iterates formed by Nesterov's accelerated gradient descent method for minimizing smooth strongly convex functions with constant stepsize follow
\begin{equation}
\left\{
\begin{aligned}
x_k &= y_{k-1} - \alpha \nabla f(y_{k-1}) \\
y_{k} &= x_k + \beta(x_k - x_{k-1})
\end{aligned}
\right.
\label{eq:nest_iterate}
\end{equation}
with $\beta = \frac{\sqrt{L} - \sqrt{\mu}}{\sqrt{L} + \sqrt{\mu}}$, 
where $L$ is the gradient's Lipschitz continuity constant and $\mu$ is the strong convexity parameter. This algorithm is better handled using the results in previous sections, and we only use it here to better illustrate our results on non-symmetric operators.

\subsubsection{Nesterov's Operator in the quadratic case}
When minimizing quadratic functions $f(x) = \frac{1}{2}\|Bx - b\|^2$, using constant stepsize~$1/L$, these iterations become,
\[
\begin{cases}
    x_k - x^* &= y_{k-1} - x^* - \frac{1}{L}B^T(By_{k-1} - b) \\
    y_k - x^* &= x_k - x^* + \beta(x_k - x^* - x_{k-1} + x^*).
\end{cases}
\]
or again,
\BEAS
\begin{bmatrix}
x_{k} - x^*\\
y_{k} - x^*
\end{bmatrix} =
\begin{bmatrix}
0 & A\\
-\beta I & (1 + \beta) A
\end{bmatrix} \begin{bmatrix}
x_{k-1} - x^*\\
y_{k-1} - x^*
\end{bmatrix} 
\EEAS
where $A = I - \frac{1}{L}B^TB$. We write $G$ the {\em non-symmetric} linear operator in these iterations, i.e.
\begin{align}
G = \begin{bmatrix}
0 & A\\
-\beta I & (1 + \beta) A
\end{bmatrix} 
\end{align}
The results in Section~\ref{s:nacc} show that we can accelerate the sequence $z_k = (x_{k},y_{k})$ if the solution to the minmax problem \eqref{eq:cheb-C} defined over the numerical range of the operator $G$ is bounded. 

\subsubsection{Numerical Range}
We can compute the numerical range of the operator $G$ using the techniques described in Section \eqref{s:nacc}. In the particular case of Nesterov's accelerated gradient method, the numerical range is a convex hull of ellipsoids. We show this by considering the $2\times 2$ operators obtained by replacing the symmetric positive matrix $G$ with its eigenvalues, to form
\begin{align} \label{eq:nesteigen}
G_j = \begin{bmatrix}
0 & \lambda_j\\
-\beta I & (1 + \beta) \lambda_j
\end{bmatrix} \quad \text{for } j \in \{1,2,\cdots,n\}
\end{align}
where $0<\lambda_1 \leq \lambda_2 \leq \cdots \leq \lambda_n < 1$ are the eigenvalues of the matrix $A$. We have the following result.

\begin{theorem}
    The numerical range of operator $G$ is given as the convex hull of the numerical ranges of the operators $G_j$, i.e. $W(G) = \Co\{W(G_1),W(G_2),\cdots,W(G_n)\}$.
\end{theorem}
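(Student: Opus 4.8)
The plan is to exploit the symmetry of $A$ to reduce $G$, via a real orthogonal change of basis, to a block-diagonal matrix whose diagonal blocks are exactly the $2\times 2$ operators $G_j$ of~\eqref{eq:nesteigen}, and then to invoke two standard facts about the numerical range: its invariance under unitary similarity, and the fact that the numerical range of a block-diagonal matrix equals the convex hull of the numerical ranges of its blocks. Since every transformation used below is real orthogonal (hence unitary), and the numerical range is computed over $\complexs$, all similarities preserve $W(\cdot)$.

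First I would diagonalize the symmetric matrix $A = I - \frac{1}{L}B^TB$ as $A = Q\Lambda Q^T$ with $Q$ orthogonal and $\Lambda = \mathrm{diag}(\lambda_1,\ldots,\lambda_n)$. Conjugating $G$ by the orthogonal matrix $\tilde U = \mathrm{diag}(Q,Q)$ (i.e. applying $Q$ separately to the $x$-block and the $y$-block) gives, using $Q^TAQ = \Lambda$ and $Q^TQ = I$,
\[
\tilde U^T G \tilde U = \begin{bmatrix} 0 & \Lambda \\ -\beta I & (1+\beta)\Lambda \end{bmatrix},
\]
so that all four $n\times n$ blocks are now diagonal. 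Next I would apply the permutation $P$ that reorders the $2n$ coordinates from $(x_1,\ldots,x_n,y_1,\ldots,y_n)$ to the interleaved ordering $(x_1,y_1,\ldots,x_n,y_n)$. Because each block above is diagonal, the cross-terms between distinct pairs $(x_i,y_i)$ and $(x_j,y_j)$ vanish, and the system decouples into $n$ independent $2\times 2$ systems:
\[
P^T \tilde U^T G \tilde U P = \mathrm{blockdiag}(G_1,\ldots,G_n),
\]
where the $2\times 2$ block associated with the pair $(x_j,y_j)$ is precisely $G_j$. Writing $U = \tilde U P$, which is orthogonal, we obtain $U^T G U = \mathrm{blockdiag}(G_1,\ldots,G_n) =: D$.

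Finally, unitary invariance of the numerical range gives $W(G) = W(D)$, so it remains to prove the block-diagonal identity $W(D) = \Co\{W(G_1),\ldots,W(G_n)\}$. For a unit vector partitioned into blocks $v = (v_1,\ldots,v_n)$ one has $v^* D v = \sum_{j} \|v_j\|^2\, w_j$, where $w_j = v_j^* G_j v_j/\|v_j\|^2 \in W(G_j)$ whenever $v_j \neq 0$ and $\sum_j \|v_j\|^2 = 1$; this exhibits every point of $W(D)$ as a convex combination of points of the $W(G_j)$. Conversely, each $W(G_j)$ is contained in $W(D)$ by supporting $v$ on a single block, and since $W(D)$ is convex by the Toeplitz–Hausdorff theorem it contains their convex hull, giving the reverse inclusion.

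I do not expect a single hard obstacle, as each step is a routine computation; the one point requiring care is the block-diagonal numerical range identity. Specifically, one must handle the degenerate terms $v_j = 0$ in the convex-combination argument, and it is convexity (Toeplitz–Hausdorff), invoked for the full matrix $D$, that closes the reverse inclusion $\Co\{W(G_j)\} \subseteq W(D)$.
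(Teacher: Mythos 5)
Your proof is correct, and it takes a cleaner, more modular route than the paper's. The paper works directly with the Rayleigh quotient: it expands $A=\sum_j \lambda_j v_jv_j^T$ and $I=\sum_j v_jv_j^T$ inside $z^*Gz$, pushes through a vec/trace computation to exhibit $z^*Gz$ as a convex combination $\sum_j \|w_j\|^2\,\bigl(w_j^*G_jw_j/\|w_j\|^2\bigr)$ with $w_j=[z_1^*v_j,\,z_2^*v_j]^T$, and then proves the reverse inclusion by explicitly constructing a unit vector $z=\sum_j \theta_j^{1/2}\,\mathrm{vec}(v_jw_j^T)/\|w_j\|$ realizing a given convex combination. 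You instead make the underlying change of basis explicit: conjugating by $\mathrm{diag}(Q,Q)$ and a perfect-shuffle permutation reduces $G$ to $\mathrm{blockdiag}(G_1,\ldots,G_n)$, after which the claim is the standard block-diagonal numerical range identity combined with unitary invariance of $W(\cdot)$. The two arguments have the same mathematical core (your blocks $v_j$ of the transformed vector are precisely the paper's $w_j$), but your factorization into two standard lemmas avoids the paper's Kronecker-product bookkeeping, and your reverse inclusion is slightly softer: you get $\Co\{W(G_j)\}\subseteq W(D)$ from Toeplitz--Hausdorff convexity of $W(D)$ rather than by constructing the achieving vector, which is a legitimate trade of explicitness for brevity. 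Your handling of the degenerate blocks $v_j=0$ (their weight $\|v_j\|^2$ vanishes, so they drop from the convex combination) closes the one gap the routine argument has; incidentally, your presentation also sidesteps the paper's minor notational slips (sums indexed from $j=0$, and the stray $I$ in the $2\times 2$ matrix $G_j$).
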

\begin{proof}
Let $v_1,v_2,\cdots,v_n$ be eigen vectors associated with eigen values $\lambda_1,\lambda_2,\cdots,\lambda_n$ of the matrix $A$. We can write 
\begin{equation*}
    A = \sum_{j=0}^{n} \lambda_jv_jv_j^T \qquad I = \sum_{j=0}^{n}v_jv_j^T
\end{equation*}
Let $t \in W(G) \subset \complexs$. By definition of the numerical range, there exists $z \in \complexs^{2n}$ with $z^*z = 1$ and 
\begin{align*}
    t &= z^*\begin{bmatrix}
0 & A\\
-\beta I & (1 + \beta) A
\end{bmatrix}z \\
&= z^*\begin{bmatrix}
0 & \sum_{j=1}^{n}\lambda_j v_j v_j^T\\
-\beta \sum_{j=1}^{n}v_j v_j^T & (1 + \beta) \sum_{j=1}^{n}\lambda_j v_j v_j^T
\end{bmatrix} z \\
&= \sum_{j=0}^{n}z^*\left(\begin{bmatrix}
0 & \lambda_j \\
-\beta  & (1 + \beta) \lambda_j 
\end{bmatrix} \otimes v_jv_j^T\right)\vect([z_1,z_2])\\
&= \sum_{j=0}^{n}z^*\vect\left(
v_jv_j^T [z_1,z_2]\begin{bmatrix}
0 & \lambda_j \\
-\beta  & (1 + \beta) \lambda_j 
\end{bmatrix}^T \right)\\
\end{align*}
and since $v_jv_j^Tv_jv_j^T=v_jv_j^T$, this last term can be written
\begin{align*}
    t &= \sum_{j=0}^{n} \Tr\left(
v_jv_j^T [z_1,z_2]\begin{bmatrix}
0 & \lambda_j \\
-\beta  & (1 + \beta) \lambda_j 
\end{bmatrix}^T [z_1,z_2]^* v_jv_j^T\right)\\
&= \sum_{j=0}^{n} \Tr(v_jv_j^T) \left(
 [v_j^Tz_1,v_j^Tz_2]\begin{bmatrix}
0 & \lambda_j \\
-\beta  & (1 + \beta) \lambda_j 
\end{bmatrix}^T [z_1^*v_j,z_2^*v_j]^T \right)\\
\end{align*}
Now, let $w_j=[z_1^*v_j,z_2^*v_j]^T$, and 
\begin{equation*}
    y_j = \frac{w_j^TG_jw_j}{\|w_j\|_2^2}
\end{equation*}
and by the definition of the numerical range, we have $y_j \in W(G_j)$. Therefore,
\begin{align*}
    t &= \sum_{j=0}^{n}\left(\frac{w_j^TG_jw_j}{\|w_j\|_2^2}\right)\|w_j\|_2^2
\end{align*}
hence 
\[
    t \in \Co(W(G_1), W(G_2),\cdots,W(G_n)).
\]
We have shown that if $t \in W(G)$ then $t \in \Co(W(G_1), W(G_2),\cdots,W(G_n))$. 
We can show the converse by following the above steps backwards. That is, if $t \in \Co(W(G_1), W(G_2),\cdots,W(G_n))$ then we have,
\begin{align*}
    t = \sum_{j=0}^{n} \theta_j \left(\frac{w_j^TG_jw_j}{\|w_j\|_2^2}\right)
\end{align*}
where $\theta_j > 0$, $\sum_{j=0}^{n}\theta_j =1$ and $w_j \in \complexs^{2}$. Now, let 
\begin{align*}
    z = \sum_{j=0}^{n}\frac{\vect(v_jw_j^T)\theta_j^{1/2}}{\|w_j\|}
\end{align*}
and we have,
\begin{align*}
    t = \sum_{j=0}^{n}[z_1^*v_j z_2^*v_j]G_j\begin{bmatrix}
    v_j^T z_1 \\
    v_j^T z_2
    \end{bmatrix} 
\end{align*}
wherein we used the fact that $v_j^Tv_k = 0$ for any $j \neq k$ and $v_j^Tv_j =1$ in computing $w_j^T = [z_1^*v_j z_2^*v_j]$. We also note that $z^*z =1$ by the definition of $z$ and rewriting the sum in the matrix form we can show that $t \in W(G)$ which completes the proof.  
\end{proof}

To minimize the solution of the Chebyshev problem in~\eqref{eq:cheb-C} and control convergence given the normalization constraint $p(1)=1$, the point $(1,0)$ should be outside the numerical range. Because the numerical range is convex and symmetric w.r.t. the real axis (the operator $G$ is real), this means checking if the maximum real value of the numerical range is less than $1$. 

For $2\times 2$ matrices, the boundary of the numerical range is given by an ellipse \citep{donoghue1957}, so the numerical range of Nesterov's accelerated gradient method is the convex hull of ellipsoids. The ellipse in \citep{donoghue1957} can be determined directly from the entries of the matrix as in \cite{johnson1974computation},as follows.

\begin{theorem}\citep{johnson1974computation}\label{th:jhon}
For any real 2 by 2 matrix 
\[
\begin{bmatrix}
a & b\\
c & d
\end{bmatrix}
\]
the boundary of the numerical range is an ellipse whose axes are the line segments joining the points x to y and w to z respectively where,
\begin{align*}
    x &= \frac{1}{2}(a + d - ((a -d)^2 + (b+c)^2)^{1/2})\\
    w &= \frac{a+d}{2} - i\left|\frac{b-c}{2}\right|\\
    y &= \frac{1}{2}(a + d + ((a -d)^2 + (b+c)^2)^{1/2})\\
    z &= \frac{a+d}{2} + i\left|\frac{b-c}{2}\right|
\end{align*}
are the points in the complex plane. 
\end{theorem}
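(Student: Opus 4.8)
\emph{Proof proposal.} The plan is to take as given, via the elliptical range fact already invoked above \citep{donoghue1957}, that $W(A)$ is an elliptical disk, so that only the \emph{location} of the ellipse --- its center and two principal axes --- remains to be pinned down. The decisive structural simplification comes from $A$ being \emph{real}: for any unit vector $v$ one has $\overline{v^*Av}=\sum_{ij}v_iA_{ij}\bar v_j=(\bar v)^*A\bar v$, so $W(A)$ is invariant under complex conjugation and hence symmetric about the real axis. Since an ellipse (that is not a circle) has exactly two axes of symmetry, this forces one principal axis to lie along the real axis and the other to be vertical, with the center on the real axis. The problem then reduces to locating the four vertices, which sit at $(\text{center})\pm(\text{real semi-axis})$ on the real axis and at $(\text{center})\pm i\,(\text{imaginary semi-axis})$.

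Next I would compute each coordinate by extremizing the real and imaginary parts of the Rayleigh quotient, reusing the tangent observations \eqref{eq: maxrealval}--\eqref{eq: fieldvaluesrotation}. For the horizontal axis, \eqref{eq: maxrealval} identifies the extreme real values as the eigenvalues of the Hermitian (here symmetric) part $H(A)=\tfrac12(A+A^\top)=\begin{bmatrix}a & \tfrac{b+c}{2}\\[1pt] \tfrac{b+c}{2} & d\end{bmatrix}$, namely $\tfrac12(a+d)\pm\tfrac12\sqrt{(a-d)^2+(b+c)^2}$, whose midpoint $\tfrac{a+d}{2}=\tfrac12\mathrm{tr}(A)$ is the center. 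By the symmetry of the previous paragraph these extrema are attained at boundary points with zero imaginary part, so they are exactly the vertices $x$ and $y$. For the vertical axis I would apply the same principle to the imaginary part, writing $\mathrm{Im}(z)=\mathrm{Re}(-iz)$ and using \eqref{eq: fieldvaluesrotation} in the form $W(-iA)=-i\,W(A)$, so that the extreme imaginary values are the eigenvalues of $H(-iA)=\tfrac{1}{2i}(A-A^\top)=\begin{bmatrix}0 & -i\tfrac{b-c}{2}\\[1pt] i\tfrac{b-c}{2} & 0\end{bmatrix}$. A direct computation gives eigenvalues $\pm\bigl|\tfrac{b-c}{2}\bigr|$, attained at the top and bottom vertices whose real part is the center $\tfrac{a+d}{2}$; these are precisely $z$ and $w$.

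Assembling the four vertices then yields the stated endpoints, with the segment from $x$ to $y$ lying on the real axis and the segment from $w$ to $z$ vertical through the center, completing the identification of the two axes.

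The step I expect to be the most delicate is not any eigenvalue computation --- those are routine --- but the geometric bridge of the first two paragraphs: arguing rigorously that the symmetry about the real axis forces the ellipse to be axis-aligned \emph{and} that the extreme real (resp.\ imaginary) values are attained exactly at the vertices on the real (resp.\ vertical) axis rather than at some oblique boundary point. Here convexity of $W(A)$ must be combined with the conjugation symmetry: if the maximal real part were attained off the real axis at some $z_0$, then $\bar z_0$ would attain it too and, by convexity of $W(A)$, so would their real midpoint, forcing a boundary vertex on the real axis. A minor separate case to dispose of is the degenerate one in which a semi-axis vanishes --- e.g.\ $b=c$, where $H(-iA)=0$ and the range collapses to the real segment $[x,y]$, or additionally $a=d$ and $b=-c$, where it collapses to a point --- but the formulas still return the correct degenerate object.
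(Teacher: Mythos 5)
The paper offers no proof of this theorem --- it is imported verbatim from \citet{johnson1974computation} --- so your proposal can only be judged on its own terms, and on those terms it is correct. Granting the elliptical range theorem of \citet{donoghue1957}, which the paper likewise invokes as a black box, all three steps are sound: (i) realness of $A$ gives $\overline{v^*Av}=(\bar v)^*A\bar v$, hence $\overline{W(A)}=W(A)$, and a (possibly degenerate) non-circular ellipse invariant under this reflection must have horizontal and vertical axes with center on the real axis; (ii) the horizontal vertices are then the extreme real parts, i.e.\ the eigenvalues $\tfrac{a+d}{2}\pm\tfrac12\sqrt{(a-d)^2+(b+c)^2}$ of $H(A)$ by \eqref{eq: maxrealval} (applied also to $-A$ for the minimum); (iii) the vertical vertices are the extreme imaginary parts, i.e.\ the eigenvalues $\pm\bigl|\tfrac{b-c}{2}\bigr|$ of $H(-iA)=\tfrac{1}{2i}(A-A^\top)$, obtained from \eqref{eq: maxrealval} together with the rotation property \eqref{eq: fieldvaluesrotation}, and your conjugation-plus-convexity remark correctly places these extrema at the axis vertices rather than at oblique boundary points. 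This is close in spirit to Johnson's own method, which the paper recalls: boundary points $p_\theta$ come from extreme eigenpairs of $H_\theta=\tfrac12(e^{i\theta}A+e^{-i\theta}A^*)$ over all angles $\theta$. Exploiting the real-matrix symmetry lets you get away with only two angles, $\theta=0$ and $\theta=-\pi/2$, instead of the whole family, which is a genuine simplification for this special case. The only quibble is cosmetic: your enumeration of degenerate cases misses the vertical-segment case (e.g.\ $a=d$, $c=-b\neq 0$, a scaled rotation, where $x=y=\tfrac{a+d}{2}$ and $W(A)$ is the segment joining $w$ to $z$), but both your formulas and your symmetry argument handle it without modification.
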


This allows us to compute the maximum real value of $W(G)$, as the point of intersection of $W(G)$ with the real line which can be computed explicitly as, 
\BEAS
    re(G) &=& \max Re(W(G)) = \max_j \mathop{Re}(W(G_j)) \\
    &=& \frac{1}{2}\left((1 + \beta)\lambda_n + \sqrt{\lambda_n^2(1+\beta)^2 + (\lambda_n - \beta)^2}\right)
\EEAS
where $\lambda_n = 1 - \frac{\mu}{L}$. 

We observe that $re(G)$ is a function of the condition number of the problem and takes values in the interval $[0, 2]$. Therefore, RNA will only work on Nesterov's accelerated gradient method when $re(G) < 1$ holds, which implies that the condition number of the problem $\kappa = \frac{L}{\mu}$ should be less than around $~2.5$ which is highly restrictive. 

An alternative approach is to use RNA on a sequence of iterates sampled every few iterations, which is equivalent to using powers of the operator~$G$. We expect the numerical radius of some power of operator~$G$ to be less than 1 for any conditioning of the problem. This is because the iterates are converging at an $R-$linear rate and so the norm of the power of the operator is decreasing at an $R-$linear rate with the powers. Therefore, using the property that the numerical radius is bounded by the norm of the operator we have,
\begin{equation*}
    re(G^p) = \max Re(W(G^p)) \leq r_{G^p} \leq \|G^p\| \leq C_p \rho^p
\end{equation*}
where $r_{G^p}$ is the numerical radius of $G^p$. Figure \ref{fig:fieldvals_random50_nest} shows the numerical range of the powers of the operator~$G$ for a random matrix $B^TB$ with dimension $d = 50$. We observe that after some threshold value for the power~$p$, $(1,0)$ lies outside the field values corresponding to $G^p$ thus guaranteeing that the acceleration scheme will work. We also observe that the boundaries of the field values are almost circular for higher powers $p$, which is consistent with results on optimal matrices in~\citep{Lewi18}. When the numerical range is circular, the solution of the Chebyshev problem is trivially equal to $z^p$ so RNA simply picks the last iterate and does not accelerate convergence.

\begin{figure}[h!t]
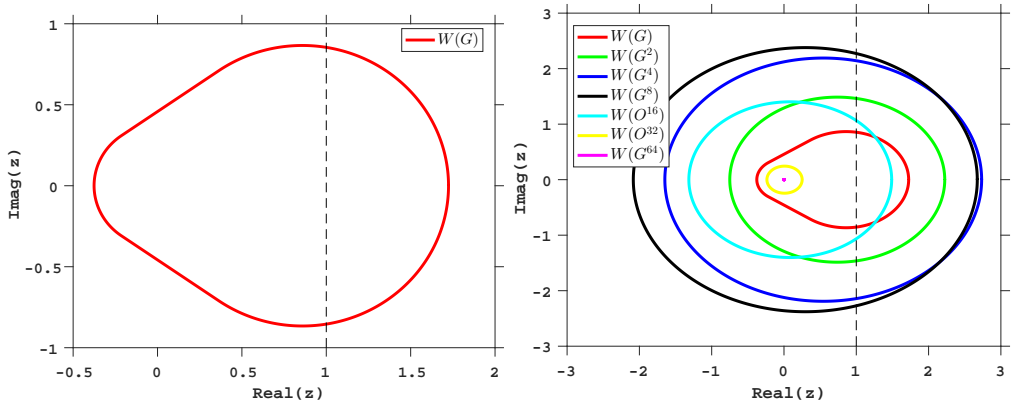

    \centering
    \includegraphics[width=0.40\textwidth]{figs/Random50_000e+00_Nesterov_FieldValues1.eps}
    \includegraphics[width=0.40\textwidth]{figs/Random50_000e+00_Nesterov_FieldValues.eps}
    \caption{Numerical range for the linear operator in Nesterov's method, on a random quadratic problem with dimension 50. Left: Operator~$G$. Right: Various operator powers~$G^p$. The RNA scheme will improve convergence whenever the point $(1,0)$ lies outside of the numerical range of the operator.}
    \label{fig:fieldvals_random50_nest}
\end{figure}

The difficulty in performing RNA on Nesterov's accelerated gradient method arises from the fact that the iterates can be non-monotonic. The restriction that $1$ should be outside the numerical range is necessary for both non-symmetric and symmetric operators. In symmetric operators, the numerical range is a line segment on the real axis and the numerical radius and spectral radius are equal, so this restriction is equivalent to having spectral radius less than $1$, i.e. having monotonically converging iterates.


\subsection{Chambolle-Pock's Primal-Dual Algorithm}
Chambolle-Pock is a first-order primal-dual algorithm used for minimizing composite functions of the form
\begin{equation}\label{prob:primal}
    \min_x h_p(x) := f(Ax) + g(x)
\end{equation}
where $f$ and $g$ are convex functions and $A$ is a continuous linear map. Optimization problems of this form arise in e.g. imaging applications like total variation minimization (see \cite{chambolle2016introduction}). The Fenchel dual of this problem is given by
\begin{equation}\label{prob:dual}
    \max_y h_d(y) := - f^*(-y) - g^*(A^*y)
\end{equation}
where $f^*, g^*$ are the convex conjugate functions of $f, g$ respectively. These problems are primal dual formulations of the general saddle point problem,
\begin{equation}\label{prob:saddle}
    \min_x \max_y <Ax, y> + g(x) - f^*(y),
\end{equation}
where $f^*, g$ are closed proper functions.
\cite{chambolle2011first} designed a first-order primal-dual algorithm for solving these problems, where primal-dual iterates are given by
\begin{equation}\label{eq:iter_cp}
\left\{
\begin{aligned}
    y_{k+1} &= \mathbf{Prox}_{f^*}^\sigma(y_k + \sigma A\bar{x}_k) \\
    x_{k+1} &= \mathbf{Prox}_g^\tau(x_k - \tau A^*y_{k+1}) \\
    \bar{x}_{k+1} &= x_{k+1} + \theta (x_{k+1} - x_{k})
\end{aligned}
\right.
\end{equation}
where $\sigma, \tau$ are the step length parameters, $\theta \in [0,1]$ is the momentum parameter and the proximal mapping of a function $f$ is defined as 
\[
    \mathbf{Prox}_f^\tau(y) = \arg \min_x \left\{\|y - x\|^2/({2\tau}) + f(x)\right\}
\]
Note that if the proximal mapping of a function is available then the proximal mapping of the conjugate of the function can be easily computed using Moreau's identity, with
\[
    \mathbf{Prox}_f^\tau(y) + \mathbf{Prox}_{f^*}^{1/\tau}(y/\tau) = y
\]
The optimal strategy for choosing the step length parameters $\sigma, \tau$ and the momentum parameter $\theta$ depend on the smoothness and strong convexity parameters of the problem. When $f^*$ and $g$ are strongly convex with strong convexity parameters $\delta$ and $\gamma$ respectively then these parameters are chosen to be constant values given as 
\begin{equation}\label{eq:pdgm_params}
    \sigma = \frac{1}{\|A\|}\sqrt{\frac{\gamma}{\delta}} \quad \tau = \frac{1}{\|A\|}\sqrt{\frac{\delta}{\gamma}} \qquad \theta = \left(1 + \frac{2\sqrt{\gamma\delta}}{\|A\|}\right)^{-1} 
\end{equation}
to yield the optimal linear rate of convergence. When only one of $f^*$ or $g$ is strongly convex with strong convexity parameter $\gamma$, then these parameters are chosen adaptively at each iteration as 
\begin{equation}
    \theta_{k} = (1 + 2\gamma\tau_k)^{-1/2}\quad\sigma_{k+1} ={\sigma_k}/{\theta_k} \quad \tau_{k+1} = \tau
_{k}\theta_k\end{equation}
to yield the optimal sublinear rate of convergence.

A special case of the primal-dual algorithm with no momentum term, i.e., $\theta = 0$ in \eqref{eq:iter_cp} is also known as the Arrow-Hurwicz method (\cite{arrowhurwicz1960}). Although theoretical complexity bounds for this algorithm are worse compared to methods including a momentum term, it is observed experimentally that the performance is either on par or sometimes better, when step length parameters are chosen as above.    

We first consider algorithms with no momentum term and apply RNA to the primal-dual sequence $z_k = (y_k,x_k)$. We note that, as observed in the Nesterov's case, RNA can only be applied on non-symmetric operators for which the normalization constant $1$ is outside their numerical range. Therefore, the step length parameters $\tau, \sigma$ should be suitably chosen such that this condition is satisfied. 

\subsubsection{Chambolle-Pock's Operator in the Quadratic Case}
When minimizing smooth strongly convex quadratic functions where $f(Ax) = \frac{1}{2}\|Ax - b\|^2$ and $g(x) = \frac{\mu}{2}\|x\|^2$, the proximal operators have closed form solutions. That is 
\[
\mathbf{Prox}_{f^*}^\sigma(y) =\frac{y - \sigma b}{1 + \sigma}
\quad \mbox{and}\quad  
\mathbf{Prox}_{g}^\tau(x) = \frac{1}{1 + \tau\mu}.
\]
Iterates of the primal-dual algorithm with no momentum term can be written as,
\begin{equation*}
\begin{aligned}
y_{k+1} &= \frac{y_k + \sigma Ax_k - \sigma b}{1 + \sigma},
\quad
x_{k+1} &= \frac{x_k - \tau A^Ty_{k+1}}{1 + \tau\mu}
\end{aligned}
\end{equation*}
Note that the optimal primal and dual solutions satisfy $y^* = Ax^* - b$ and $x^* = \frac{-1}{\mu}A^Ty$. This yields the following operator for iterations 
\begin{align}
G = \begin{bmatrix}
\frac{I}{1 + \sigma} & \frac{\sigma A}{1 + \sigma}\\
\frac{\tau A^T}{(1 + \sigma)(1 + \tau\mu)} &  \frac{I}{1 + \tau\mu} - \frac{\tau\sigma A^TA}{(1 + \sigma)(1 + \tau\mu)}. \\
\end{bmatrix}
\end{align}
Note that $G$ is a non-symmetric operator except when $\sigma = \frac{\tau}{1 + \tau\mu}$, in which case the numerical range is a line segment on the real axis and the spectral radius is equal to the numerical radius.

\subsubsection{Numerical Range}
The numerical range of the operator can be computed using the techniques described in Section \ref{s:nacc}. As mentioned earlier, the point $1$ should be outside the numerical range for the Chebyshev polynomial to be bounded. Therefore, using \eqref{eq: maxrealval}, we have,
$
    re(G) = \max Re(W(G))  = \lambda_{max}\left(\frac{G + G^*}{2}\right)
$
The step length parameters $\sigma, \tau$ should be chosen such that the above condition is satisfied. We observe empirically that there exists a range of values for the step length parameters such that $re(G) < 1$. Figure~\ref{fig:fieldvals_sonar} shows the numerical range of operator $G$ for $\sigma = 4, \tau =1/\|A^TA\|$ with two different regularization constants and Figure \ref{fig:fieldvals_sonar_contours} shows the regions for which $re(G^p)\leq 1$ (converging) for different values of $\sigma$ and~ $\tau$.

\begin{figure}[h!t]
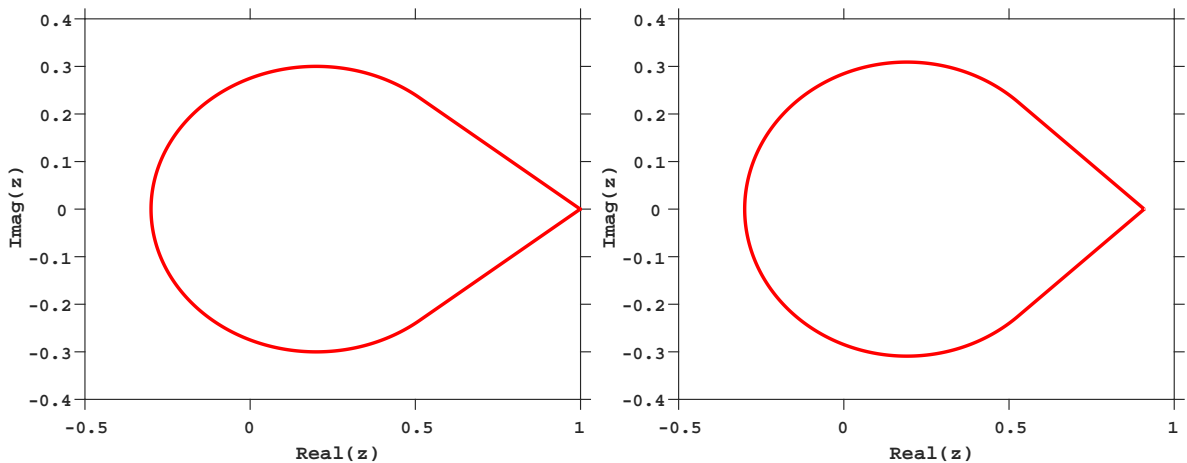

    \centering
    \includegraphics[width=0.47\textwidth]{figs/SonarNormalizedScaled_100e-03_CP_FieldValues_400e+00_100e+00.eps}
    \includegraphics[width=0.47\textwidth]{figs/SonarNormalizedScaled_100e-01_CP_FieldValues_400e+00_100e+00.eps}
    \caption{Field values for the Sonar dataset \citep{gorman1988analysis} with $\sigma = 4, \tau =1/\|A^TA\|$. The dataset has been scaled such that $\|A^TA\| = 1$. Left: $\mu = 10^{-3}$, right: $\mu = 10^{-1}$. The smaller numerical range on the right means faster convergence.}
    \label{fig:fieldvals_sonar}
\end{figure}

\begin{figure}[h!t]
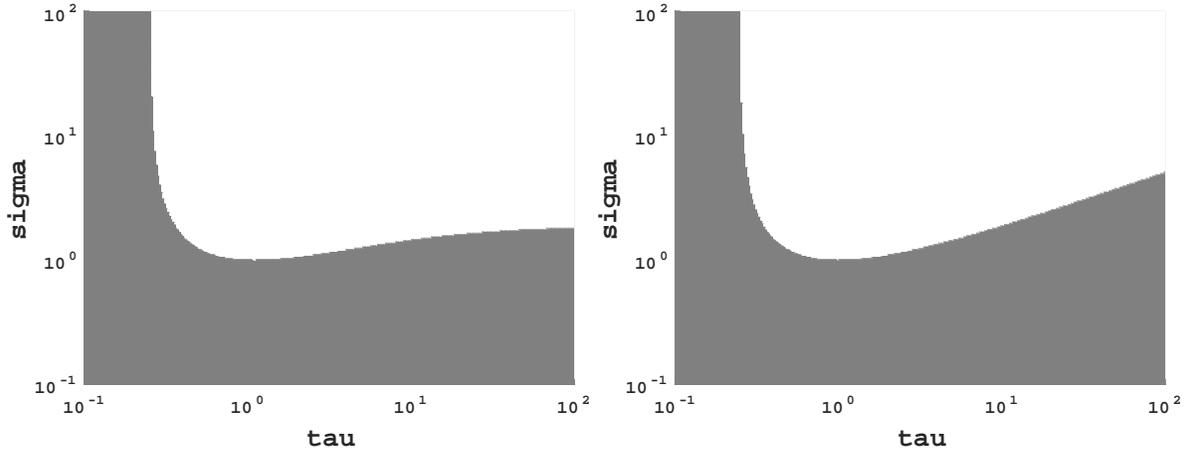

    \centering
    \includegraphics[width=0.47\textwidth]{figs/SonarNormalized_1_00e-01_CP_FieldValues_Imagesc_gray_v2.eps}
    \includegraphics[width=0.47\textwidth]{figs/SonarNormalized_1_00e-03_CP_FieldValues_Imagesc_gray_v2.eps}
    \caption{Plot of the $re(G^p)=1$ frontier with degree $p=5$ for the Sonar dataset \citep{gorman1988analysis} for different values of~$\tau$ and~$\sigma$. White color represents values for which $re(G^p)\leq 1$ (converging) and black color represents values $re(G^p)>1$ (not converging). Left: $\mu = 10^{-3}$. Right: $\mu = 10^{-1}$.}
    \label{fig:fieldvals_sonar_contours}
\end{figure}


\section{RNA on nonlinear iterations}\label{s:nonlin}

In previous sections, we analyzed the rate of convergence of RNA on linear algorithms (or quadratic optimization problems). In practice however, the operator $g$ is not linear, but can instead be nonlinear with potentially random perturbation. In this situation, regularizing parameter ensures RNA converges \citep{scieur2016regularized}. 

In this section, we first introduce the CNA algorithm, a constrained version of RNA that explicitly bounds the norm of the coefficients $c$ for the linear combinations. We show its equivalence with the RNA algorithm. Then, we analyze the rate of convergence of CNA when $g$ is a linear function perturbed with arbitrary errors, whose origin can be nonlinearities and/or random noises.

\subsection{Constrained Nonlinear Acceleration}
We now introduce the constrained version of RNA, replacing the regularization term by the hard constraint
\[
    \| c \|_2 \leq \frac{1+\tau}{\sqrt{N}}.
\]
In this algorithm, the parameter $\tau > 0$ controls the norm of the coefficients $c$. Of course, all the previous analysis applies to CNA, as RNA with $\lambda = 0$ is exactly CNA with $\tau = \infty$.

\begin{algorithm}[htb]
   \caption{Constrained Nonlinear Acceleration (\textbf{CNA})}
    \label{algo:cna}
\begin{algorithmic}
   \STATE {\bfseries Data:} Matrices $X$ and $Y$ of size $d\times N$ constructed from the iterates as in~\eqref{eq:general_iteration} and~\eqref{eq:def_xy}.
   \STATE {\bfseries Parameters:} Mixing $\eta\neq 0$, constraint $\tau \geq 0$.\\
   \hrulefill
   \STATE \textbf{1.} Compute matrix of residuals $R = X-Y$.
   \STATE \textbf{2.} Solve
   \BEQ
        \textstyle  c^{(\tau)} = \argmin_{c:c^T\textbf{1} = 1} \|Rc\|_2 \quad \text{s.t. } \; \|c\|_2\leq {\textstyle \frac{1+\tau}{\sqrt{N}}} \label{eq:ctau}
    \EEQ
    \STATE \textbf{3.} Compute extrapolated solution $\yex = (Y-\eta R)c^{(\tau)}$.
\end{algorithmic}
\end{algorithm}

\subsection{Equivalence Between Constrained \& Regularized Nonlinear Acceleration}

The parameters $\lambda$ in Algorithm \ref{algo:rna} and $\tau$ in Algorithm \ref{algo:cna} play similar roles. High values of $\lambda$ give coefficients close to simple averaging, and $\lambda = 0$ retrieves Anderson Acceleration. We have the same behavior when $\tau = 0$ or $\tau = \infty$. We can jump from one algorithm to the other using dual variables, since~\eqref{eq:cl} is the Lagrangian relaxation of the convex problem \eqref{eq:ctau}. This means that, for all values of $\tau$ there exists $\lambda = \lambda(\tau)$ that achieves $c^{\lambda} = c^{(\tau)}$. In fact, we can retrieve $\tau$ from the solution $c^{\lambda}$ by solving
\[
    \textstyle \frac{1+\tau}{\sqrt{N}} = \|c^{\lambda}\|_2.
\]
Conversely, to retrieve $\lambda$ from $c^{(\tau)}$, it suffices to solve
\BEQ
    \left\| \frac{(R^TR+(\lambda\|R\|^2_2) I)^{-1} \textbf{1}_N}{\textbf{1}_N^T(R^TR+(\lambda\|R\|^2_2) I)^{-1}\textbf{1}_N} \right\|^2 = \frac{(1+\tau)^2}{N}, \label{eq:nonlinear_equation}
\EEQ
assuming the constraint in \eqref{eq:ctau} tight, otherwise $\lambda = 0$. Because the norm in \eqref{eq:nonlinear_equation} is increasing with $\lambda$, a binary search or one-dimensional Newton methods gives the solution in a few iterations.

The next proposition bounds the norm of the coefficients of Algorithm~\ref{algo:rna} with an expression similar to~\eqref{eq:ctau}.
\begin{proposition}
    The norm of $c^{\lambda}$ from \eqref{eq:cl} is bounded by
    \BEQ
        \textstyle \|c^{\lambda}\|_2 \leq \frac{1}{\sqrt{N}}\sqrt{1+\frac{1}{\lambda}} \label{eq:bound_norm_lambda}
    \EEQ
\end{proposition}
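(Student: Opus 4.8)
The plan is to use the fact that the matrix $M \triangleq R^TR + \lambda\|R\|_2^2 I$ appearing in~\eqref{eq:cl} is symmetric positive definite (for $\lambda>0$ and $R\neq 0$) and is pinched between two scalar multiples of the identity. Since $c^\lambda = M^{-1}\mathbf{1}_N/(\mathbf{1}_N^T M^{-1}\mathbf{1}_N)$, I would first reduce the whole claim to a ratio of quadratic forms in $M^{-1}$ via the elementary identity
\[
    \|c^\lambda\|_2^2 = \frac{\mathbf{1}_N^T M^{-2}\mathbf{1}_N}{\bigl(\mathbf{1}_N^T M^{-1}\mathbf{1}_N\bigr)^2}.
\]

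Next, I would record the spectral localization of $M$. The eigenvalues of $R^TR$ lie in $[0,\|R\|_2^2]$ by definition of the spectral norm, so every eigenvalue $\mu_i$ of $M$ satisfies $\lambda\|R\|_2^2 \le \mu_i \le (1+\lambda)\|R\|_2^2$. In Loewner order this reads $\lambda\|R\|_2^2\, I \preceq M \preceq (1+\lambda)\|R\|_2^2\, I$, equivalently
\[
    \tfrac{1}{(1+\lambda)\|R\|_2^2}\, I \preceq M^{-1} \preceq \tfrac{1}{\lambda\|R\|_2^2}\, I.
\]
Then I would bound numerator and denominator separately. For the numerator, writing $w = M^{-1/2}\mathbf{1}_N$ and splitting $M^{-2}=M^{-1/2}M^{-1}M^{-1/2}$ gives $\mathbf{1}_N^T M^{-2}\mathbf{1}_N = w^T M^{-1}w \le \tfrac{1}{\lambda\|R\|_2^2}\, w^T w = \tfrac{1}{\lambda\|R\|_2^2}\,\mathbf{1}_N^T M^{-1}\mathbf{1}_N$. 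For the denominator, the lower Loewner bound gives $\mathbf{1}_N^T M^{-1}\mathbf{1}_N \ge \tfrac{1}{(1+\lambda)\|R\|_2^2}\|\mathbf{1}_N\|_2^2 = \tfrac{N}{(1+\lambda)\|R\|_2^2}$. Substituting both into the identity, one factor of $\mathbf{1}_N^T M^{-1}\mathbf{1}_N$ cancels and what survives is
\[
    \|c^\lambda\|_2^2 \le \frac{1}{\lambda\|R\|_2^2}\cdot\frac{1}{\mathbf{1}_N^T M^{-1}\mathbf{1}_N} \le \frac{1}{\lambda\|R\|_2^2}\cdot\frac{(1+\lambda)\|R\|_2^2}{N} = \frac{1+\lambda}{\lambda N} = \frac{1}{N}\Bigl(1+\frac{1}{\lambda}\Bigr),
\]
and taking square roots yields~\eqref{eq:bound_norm_lambda}.

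The computation is short, so the only point demanding care is the numerator bound. Bounding $M^{-2}$ crudely by $\tfrac{1}{\lambda^2\|R\|_2^4}I$ is too lossy: it gives only $\|c^\lambda\|_2^2 \le (1+1/\lambda)^2/N$, weaker by a factor $1+1/\lambda$. The sharp route keeps one factor of $M^{-1}$ intact through the symmetric split $M^{-2}=M^{-1/2}M^{-1}M^{-1/2}$, so that the surviving $\mathbf{1}_N^T M^{-1}\mathbf{1}_N$ cancels against one copy in the denominator. An equivalent, fully elementary presentation diagonalizes $M$, expands $\mathbf{1}_N$ in its orthonormal eigenbasis with coefficients $a_i$ (so $\sum_i a_i^2 = N$), writes the two quadratic forms as $\sum_i a_i^2/\mu_i^2$ and $\sum_i a_i^2/\mu_i$, and applies the per-eigenvalue bounds $\lambda\|R\|_2^2 \le \mu_i \le (1+\lambda)\|R\|_2^2$ term by term; both presentations give the identical constant.
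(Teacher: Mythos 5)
Your proof is correct: the identity $\|c^\lambda\|_2^2 = \mathbf{1}_N^TM^{-2}\mathbf{1}_N/(\mathbf{1}_N^TM^{-1}\mathbf{1}_N)^2$, the spectral localization $\lambda\|R\|_2^2\, I \preceq M \preceq (1+\lambda)\|R\|_2^2\, I$, and the symmetric split $M^{-2}=M^{-1/2}M^{-1}M^{-1/2}$ that lets one copy of $\mathbf{1}_N^TM^{-1}\mathbf{1}_N$ cancel are all sound, and together they yield exactly the claimed constant. The paper itself argues differently: it simply defers to Proposition 3.2 of \citet{scieur2016regularized}, where the bound is obtained variationally rather than spectrally. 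There one observes that $c^\lambda$ in \eqref{eq:cl} is the minimizer of $\|Rc\|_2^2+\lambda\|R\|_2^2\|c\|_2^2$ over the affine set $\{c:\mathbf{1}_N^Tc=1\}$ (the closed form is just its Lagrangian solution), so comparing its objective value with that of the feasible uniform-average vector $\bar c=\mathbf{1}_N/N$ gives
\[
\lambda\|R\|_2^2\,\|c^\lambda\|_2^2 \;\leq\; \|Rc^\lambda\|_2^2+\lambda\|R\|_2^2\|c^\lambda\|_2^2 \;\leq\; \|R\bar c\|_2^2+\lambda\|R\|_2^2\|\bar c\|_2^2 \;\leq\; \frac{(1+\lambda)\|R\|_2^2}{N},
\]
which is \eqref{eq:bound_norm_lambda} after dividing by $\lambda\|R\|_2^2$ and taking square roots. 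The variational route is shorter and sidesteps the one delicate point in your argument (keeping a factor $M^{-1}$ intact in the numerator, which you correctly identify as essential for the sharp constant), but it requires recognizing \eqref{eq:cl} as the solution of that constrained least-squares problem; your argument works directly from the closed-form formula using only eigenvalue bounds, so it is self-contained and does not presuppose that interpretation. Both routes produce the identical bound.
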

\begin{proof}
    See \citet{scieur2016regularized}, (Proposition 3.2).
\end{proof}

Having established the equivalence between constrained and regularized nonlinear acceleration, the next section discusses the rate of convergence of CNA in the presence of perturbations.

\subsection{Constrained Chebyshev Polynomial}
The previous results consider the special cases where $\lambda = 0$ or $\tau = 0$,  which means that $\|c\|$ is unbounded. However, \citet{scieur2016regularized} show instability issues when $\|c\|$ is not controlled. Regularization is thus required in practice to make the method more robust to perturbations, even in the quadratic case (e.g., round-off errors). Unfortunately, this section will show that robustness comes at the cost of a potentially slower rate of convergence.

We first introduce \textit{constrained Chebyshev polynomials} for the range of a specific matrix. Earlier work in \citep{scieur2016regularized} considered regularized Chebyshev polynomials, but using a constrained formulation significantly simplifies the convergence analysis here. This polynomial plays an important role in Section~\ref{sec:convergence_rate_cna} in the convergence analysis.

\begin{definition}
The {Constrained Chebyshev Polynomial} $\chebpoly_N(x)$ of degree $N$ solves, for $\tau \geq 0$,
\BEQ
    \chebpoly_N(x) \triangleq \argmin_{p\in \pnp} \max_{x\in W(G)} p(x)  \quad \text{s.t.}  ~  \|p\|_2 \leq {\textstyle \frac{1+\tau}{\sqrt{1+N}}} \label{eq:constrained_cheby} 
\EEQ
in the variable ${p\in \pnp}$, where $W(G)$ is the numerical range of $G$. We write $\cheb_N \triangleq \| \chebpoly_N(G) \|_2$ the norm of the polynomial $\chebpoly_N$ applied to the matrix $G$.
\end{definition}

\subsection{Convergence Rate of CNA without perturbations} \label{sec:convergence_rate_cna}
The previous section introduced constrained Chebyshev polynomials, which play an essential role in our convergence results when $g$ is nonlinear and/or iterates~\eqref{eq:general_iteration} are noisy. Instead of analyzing Algorithm \ref{algo:rna} directly, we focus on its constrained counterpart, Algorithm \ref{algo:cna}. 

\begin{proposition} \label{prop:rate_constrained}
Let $X$, $Y$ \eqref{eq:def_xy} be build using iterates from \eqref{eq:general_iteration} where $g$ is linear \eqref{eq:linear_g} does not have $1$ as eigenvalue. Then, the norm of the residual \eqref{eq:residue} of the extrapolation produced by Algorithm \ref{algo:cna} is bounded by
\BEQ
    \|r(\yex)\|_2 \leq \| I-\eta (G-I)  \|_2 \|r(x_0)  \|_2\; \cheb_{N-1},
\EEQ
where $\tau \geq 0$ and $\cheb_N$ is defined in \eqref{eq:constrained_cheby}.
\end{proposition}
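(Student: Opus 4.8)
The plan is to mirror the proof of Theorem~\ref{thm:optimal_rate}, inserting the norm constraint on the coefficients at the right place and then exhibiting the constrained Chebyshev polynomial as a feasible point of the inner minimization. First I would reproduce the opening algebraic manipulation of that theorem. Starting from $\yex = (Y - \eta R)c^{(\tau)}$ with $c^{(\tau)}$ the solution of~\eqref{eq:ctau}, I would use $X^* c^{(\tau)} = x^*$ (valid since $(c^{(\tau)})^T\mathbf{1}=1$) together with $R = (G-I)(Y-X^*)$ to obtain $\yex - x^* = (I - \eta(G-I))(Y - X^*)c^{(\tau)}$. Applying $G-I$ and using that it commutes with $I-\eta(G-I)$ then gives
\[
    r(\yex) = (I-\eta(G-I))\, R\, c^{(\tau)},
\]
so that $\|r(\yex)\|_2 \leq \|I - \eta(G-I)\|_2\,\|R\,c^{(\tau)}\|_2$. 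This isolates the factor $\|R c^{(\tau)}\|_2$ that I must bound by $\cheb_{N-1}\,\|r(x_0)\|_2$.

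Next I would recast the linear-algebra problem $\|R c^{(\tau)}\|_2 = \min\{\|Rc\|_2 : c^T\mathbf{1} = 1,\ \|c\|_2 \leq (1+\tau)/\sqrt N\}$ in the language of polynomials. By Proposition~\ref{prop:poly_iter} the $i$-th column of $R$ equals $p_{i-1}(G)\,r(x_0)$ with $p_{i-1}\in\pip$ of degree exactly $i-1$, hence $Rc = p(G)\,r(x_0)$ for $p = \sum_i c_i\,p_{i-1}$. Because the $p_i$ have degree exactly $i$ they form a basis of the polynomials of degree at most $N-1$; because each $p_i(1)=1$ the constraint $c^T\mathbf{1} = 1$ is precisely $p(1)=1$; and the coordinate vector of $p$ in this basis is $c$, so $\|c\|_2$ is the polynomial norm $\|p\|_2$ entering~\eqref{eq:constrained_cheby}. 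The feasible set of~\eqref{eq:ctau} therefore matches, under this bijection, the feasible set $\{p \in \pnm : \|p\|_2 \leq (1+\tau)/\sqrt N\}$ of the degree-$(N-1)$ constrained Chebyshev problem, the constants agreeing since $\sqrt{1+(N-1)}=\sqrt N$.

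Finally, since $c^{(\tau)}$ minimizes $\|Rc\|_2$ over this feasible set, I would upper bound the minimum by its value at one admissible point, namely the coordinate vector of $\chebpoly_{N-1}$. That polynomial is admissible by the matching just established, so
\[
    \|R\,c^{(\tau)}\|_2 \leq \|\chebpoly_{N-1}(G)\,r(x_0)\|_2 \leq \|\chebpoly_{N-1}(G)\|_2\,\|r(x_0)\|_2 = \cheb_{N-1}\,\|r(x_0)\|_2,
\]
which combined with the first display yields the stated bound. The only delicate point — and the main obstacle — is the norm identification in the second paragraph: I must confirm that $\|p\|_2$ in~\eqref{eq:constrained_cheby} denotes the coefficient $2$-norm relative to the algorithm-generated basis $\{p_i\}$ rather than, say, the monomial basis, since it is this choice that makes the constraint constant transfer exactly between~\eqref{eq:ctau} and~\eqref{eq:constrained_cheby}. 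I note that, in contrast to Theorem~\ref{thm:optimal_rate}, the argument uses only the \emph{feasibility} of $\chebpoly_{N-1}$, not its optimality for the Chebyshev objective; the Chebyshev and Crouzeix structure is invoked separately, downstream, to bound the quantity $\cheb_{N-1}$ itself.
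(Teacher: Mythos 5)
Your proof is correct and takes the same route as the paper: the paper's own proof is a one-line remark that one repeats the argument of Theorem~\ref{thm:optimal_rate}, replacing the unconstrained minimal polynomial with the constrained Chebyshev polynomial as the feasible comparison point, which is exactly the argument you spelled out. Your flagged subtlety is also the right reading of the paper — the norm $\|p\|_2$ in \eqref{eq:constrained_cheby} must be the coefficient $2$-norm in the algorithm-generated basis $\{p_i\}$ (not the monomial basis, except for plain gradient descent where the two coincide), since this is what makes the feasible sets of \eqref{eq:ctau} and \eqref{eq:constrained_cheby} correspond and the constants $(1+\tau)/\sqrt{N}$ match.
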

\begin{proof}
    The proof is similar to the one of Theorem \ref{thm:optimal_rate}. It suffices to use the constrained Chebyshev polynomial rather than the rescaled Chebyshev polynomial from \cite{golub1961chebyshev}.
\end{proof}

Proposition \ref{prop:rate_constrained} with $\tau = \infty$ gives the same result than Theorem \ref{thm:optimal_rate}. However, smaller values of $\tau$ give weaker results as $\cheb_{N-1}$ increases. However, smaller values of $\tau$ also reduce the norm of coefficients $c^{(\tau)}$ \eqref{eq:ctau}, which makes the algorithm more robust to noise. 

Using the constrained algorithm in the context of non-perturbed linear function $g$ yields no theoretical benefit, but the bounds on the extrapolated coefficients simplify the analysis of perturbed non-linear optimization schemes as we will see below.

In this section, we analyze the convergence rate of Algorithm \ref{algo:cna} for simplicity, but the results also hold for Algorithm \ref{algo:rna}. We first introduce the concept of perturbed linear iteration, then we analyze the convergence rate of RNA in this setting.

\textbf{Perturbed Linear Iterations.} 
Consider the following perturbed scheme,
\BEA \label{eq:perturbed_iteration_matrix}
    \tilde X_i = X^* + G(\tilde Y_{i-1}-X^*) + E_i, \qquad \tilde Y_i = [x_0,\tilde X_i] L_i,
\EEA
where $\tilde X_i$ and $\tilde Y_i$ are formed as in~\eqref{eq:def_xy} using the perturbed iterates $\tilde x_i$ and $\tilde y_i$, and $L_i$ is constructed using \eqref{eq:recurence_L}, and we write $E_i = [e_1,e_2,\ldots,e_i]$. For now, we do not assume anything on $e_i$ or $E_i$. This class contains many schemes such as gradient descent on nonlinear functions, stochastic gradient descent or even Nesterov's fast gradient with backtracking line search for example.

The notation \eqref{eq:perturbed_iteration_matrix} makes the analysis simpler than in \citep{scieur2016regularized,scieur2017nonlinear}, as we have the explicit form of the error over time. Consider the perturbation matrix $P_i$,
\BEQ
    P_i \triangleq \tilde R_i - R_i, \label{eq:perturbation_matrix}
\EEQ
Proposition \ref{prop:explicit_formula_perturbation} shows that the magnitude of the perturbations $\|P_i\|$ is proportional to the noise matrix $\|E_i\|$, i.e., $\|P_i\| = O(\|E_i\|)$.
\begin{proposition} \label{prop:explicit_formula_perturbation}
Let $P_i$ be defined in \eqref{eq:perturbation_matrix}, where $(\tilde X_i, \tilde Y_i)$ and $(\tilde X_i, \tilde Y_i)$ are formed respectively by \eqref{eq:general_iteration_matrix} and \eqref{eq:perturbed_iteration_matrix}. Let $\bar L_{j} = \| L_1\|_2  \| L_{2}\|_2 \ldots  \|L_j \|_2$. Then, we have the following bound
    \[
        \|P_i\| \leq 2\|E_i\| \bar L_{i} \sum_{j=1}^{i} \|G\|^j.
    \]
\end{proposition}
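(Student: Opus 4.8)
The plan is to subtract the unperturbed matrix recursion \eqref{eq:general_iteration_matrix} from its perturbed counterpart \eqref{eq:perturbed_iteration_matrix} and follow the two error matrices $\Delta X_i \triangleq \tilde X_i - X_i$ and $\Delta Y_i \triangleq \tilde Y_i - Y_i$. Because the initial iterate $x_0$ is shared by both recursions, it cancels in the $Y$-update and we are left with the coupled linear recursion $\Delta X_i = G\,\Delta Y_{i-1} + E_i$ and $\Delta Y_i = [\,0,\ \Delta X_i\,]L_i$. Reading the residual difference column by column through $r_j = x_j - y_{j-1}$ shows $P_i = \tilde R_i - R_i = \Delta X_i - \Delta Y_i$, so it is enough to control $\|\Delta X_i\|_2$ and $\|\Delta Y_i\|_2$ separately.

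First I would eliminate $\Delta X_i$ and reduce everything to a scalar recursion for $b_i \triangleq \|\Delta Y_i\|_2$. Since prepending a zero column does not change the spectral norm, $\|[\,0,\ \Delta X_i\,]\|_2 = \|\Delta X_i\|_2$, and submultiplicativity gives $b_i \le \|L_i\|_2\big(\|G\|_2\,b_{i-1} + \|E_i\|_2\big)$ with $b_0 = 0$, while the same estimate bounds $\|\Delta X_i\|_2 \le \|G\|_2\,b_{i-1} + \|E_i\|_2$. A straightforward induction on $i$ then unrolls $b_i$ into a sum over $j = 1,\dots,i$ of terms $\big(\prod_{m=j}^{i}\|L_m\|_2\big)\|G\|_2^{\,i-j}\|E_j\|_2$.

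The estimate collapses once two monotonicity observations are in place. The consistency requirement that the columns of each $L_m$ sum to one means $L_m^{T}\mathbf{1} = \mathbf{1}$, hence $\|L_m\|_2 = \|L_m^{T}\|_2 \ge 1$; therefore every partial product $\prod_{m=j}^{i}\|L_m\|_2$ is dominated by the full product $\bar L_i = \|L_1\|_2\cdots\|L_i\|_2$. Moreover $\tilde X_i,\tilde Y_i$ grow by appending columns, so $\|E_j\|_2$ is nondecreasing in $j$ and bounded by $\|E_i\|_2$. Feeding both facts into the unrolled sum leaves a geometric series in $\|G\|_2$ multiplied by $\|E_i\|_2\,\bar L_i$; the identical bound holds for $\|\Delta X_i\|_2$, and the triangle inequality $\|P_i\|_2 \le \|\Delta X_i\|_2 + \|\Delta Y_i\|_2$ supplies the factor $2$ in the statement.

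The step I expect to be delicate is the bookkeeping in the unrolling: the nested recursion produces products of $\|L_m\|_2$ over sliding index windows, and the whole argument rests on the simple but essential fact $\|L_m\|_2 \ge 1$ that lets these windows be enlarged to $\bar L_i$ uniformly. Pinning down the exact range of the geometric sum so that it reads $\sum_{j=1}^{i}\|G\|_2^{\,j}$ rather than a shifted copy is the other place needing care; this is controlled by the base case $\Delta Y_0 = 0$ together with the extra power of $G$ carried by the $(G-I)$ factor that appears when the residual difference $P_i$ is written out column-wise.
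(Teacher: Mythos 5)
Your core argument follows essentially the same route as the paper's proof: subtract the perturbed recursion from the unperturbed one, unroll the resulting linear recursion in the error, and invoke exactly the two monotonicity facts the paper relies on, namely $\|L_m\|_2\ge 1$ (which you justify correctly from $L_m^T\mathbf{1}=\mathbf{1}$, a detail the paper merely asserts) and $\|E_j\|_2\le \|E_i\|_2$. The only bookkeeping difference is where the factor $2$ comes from: the paper derives a single recursion on $P_i$ itself, $P_i = G[0,P_{i-1}]L_{i-1}+E_i-[0,E_{i-1}]L_{i-1}$, so the two error terms supply the $2$, whereas you keep the coupled recursions on $\Delta X_i,\Delta Y_i$ and get the $2$ from the triangle inequality. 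These are interchangeable, and your unrolled estimate matches what the paper's recursion produces.

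The one step that would fail is precisely the one you flag as delicate: converting your geometric sum $\sum_{j=0}^{i-1}\|G\|_2^{j}$ into the stated $\sum_{j=1}^{i}\|G\|_2^{j}$. The mechanism you propose, an extra power of $G$ coming from the $(G-I)$ factor in $P_i=(G-I)\Delta Y+E_i$, cannot produce this shift: the term $E_i$ enters with no factor of $G$ at all, and bounding $\|G-I\|_2\le 1+\|G\|_2$ still leaves a constant ($j=0$) term. In fact no argument can produce the stated range, because the displayed bound is itself off by one: at $i=1$ both schemes start from the same $x_0$, so $P_1=E_1$ exactly, while the stated bound reads $2\|E_1\|\,\|L_1\|_2\,\|G\|_2$, which is smaller than $\|E_1\|$ whenever $\|G\|_2<1/(2\|L_1\|_2)$ --- e.g.\ for any contractive $G$ with small enough norm. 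The paper's own unrolling also yields $\sum_{j=0}^{i-1}\|G\|_2^{j}$ (and its later use in Proposition \ref{prop:convergence_stoch_gradient} reinstates the constant term as ``$1+\sum_{j=1}^{i}\cdots$''), so the discrepancy is a typo in the statement, not a gap you need to close: your bound $\|P_i\|\le 2\|E_i\|\bar L_i\sum_{j=0}^{i-1}\|G\|_2^{j}$ is the correct conclusion and should stand as is rather than be massaged to match the displayed indices.
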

\begin{proof}
    First, we start with the definition of $R$ and $\tilde R$. Indeed,
    \[
        \tilde R_i - R_i = \tilde X_i-X_i - (\tilde Y_{i-1} - Y_{i-1}). 
    \]
    By definition,
    \[
        \tilde X_i - X_i = G(\tilde Y_{i-1}-X^*) + X^* + E_i -  G( Y_{i-1}-X^*) - X^* = G(\tilde Y_{i-1}-Y_{i-1}) + E_i
    \]
    On the other side,
    \[
        \tilde Y_{i-1} - Y_{i-1} = [0;\tilde X_{i-1}-X_{i-1}]L_{i-1}
    \]
    We thus have
    \BEAS
        P_i & =  & \tilde X_i-X_i - (\tilde Y_{i-1} - Y_{i-1}),\\
        & = & G(\tilde Y_{i-1}-Y_{i-1}) + E_i - [0;\tilde X_{i-1}-X_{i-1}]L_{i-1},\\
        & = & G( [0;\tilde X_{i-1}-X_{i-1}]L_{i-1}) + E_i - [0;G(\tilde Y_{i-2}-Y_{i-2}) + E_{i-1}]L_{i-1},\\
        & = & G [0;P_{i-1}]L_{i-1} + E_i - [0;E_{i-1}]L_{i-1}.\\
    \EEAS
    Finally, knowing that $\|E_i\| \geq \|E_{i-1}\|$ and $\|L_i\|\geq 1$, we expand
    \BEAS
        \|P_i\| & = & \| G \| \|P_{i-1}\|\|L_{i-1}\| + \|E_i\| + \|E_{i-1}\|\|L_{i-1}\|\\
        & \leq & \| G \| \|P_{i-1}\|\|L_{i-1}\| + 2\|E_i\| \|L_{i-1}\|
    \EEAS
    to have the desired result.
\end{proof}

We now analyze how close the output of Algorithm \ref{algo:cna} is to $x^*$. To do so, we compare scheme \eqref{eq:perturbed_iteration_matrix} to its perturbation-free counterpart \eqref{eq:general_iteration_matrix}. Both schemes have the same starting point $x_0$ and ``fixed point''~$x^*$. It is important to note that scheme~\eqref{eq:perturbed_iteration_matrix} may not converge due to noise. The next theorem bounds the accuracy of the output of CNA.
\begin{theorem} \label{thm:convergence_perturbation}
    Let $\yex$ be the output of Algorithm \eqref{algo:cna} applied to \eqref{eq:perturbed_iteration_matrix}. Its accuracy is bounded by
    \BEAS
        \|(G-I) \left(\yex - x^*\right)\|
        \leq \|I-\eta(G-I) \| \Big(\underbrace{ \cheb_{N-1} \|(G-I)(x_0-x^*)\|}_{\textbf{acceleration}}
        + \underbrace{\textstyle \frac{1+\tau}{\sqrt{N}} \big( \|P_N\| + \|E_N\|\big)}_{\textbf{stability}}\Big).
    \EEAS
\end{theorem}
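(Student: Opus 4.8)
The plan is to mirror the proofs of Theorem~\ref{thm:optimal_rate} and Proposition~\ref{prop:rate_constrained}, but now carry the perturbation terms explicitly through the algebra. Write $c = c^{(\tau)}$ for the coefficient vector returned by Algorithm~\ref{algo:cna} (the solution of~\eqref{eq:ctau} computed on the \emph{perturbed} residuals $\tilde R = \tilde R_N$), and recall that $\mathbf{1}^T c = 1$, so $X^* c = x^*$. Starting from $\yex = (\tilde Y - \eta \tilde R)c$, I would first establish the exact identity
\[
    (G-I)(\yex - x^*) = (I - \eta(G-I))\,\tilde R c - E c ,
\]
using only $\yex - x^* = (\tilde Y - X^*)c - \eta \tilde R c$ together with the perturbed relation $\tilde R = (G-I)(\tilde Y - X^*) + E$ (which gives $(G-I)(\tilde Y - X^*) = \tilde R - E$) and the fact that $(G-I)$ commutes with $I - \eta(G-I)$. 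This cleanly separates an ``ideal'' piece $(I-\eta(G-I))\tilde R c$ from a pure-noise piece $E c$.

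Next I would split $\tilde R c = R c + P c$ via the definition $P = \tilde R - R$ in~\eqref{eq:perturbation_matrix} and exploit the optimality of $c$. Because $c$ \emph{minimizes} $\|\tilde R c\|$ over the feasible set $\{c : \mathbf{1}^T c = 1,\ \|c\|_2 \le (1+\tau)/\sqrt{N}\}$, for any feasible comparison vector $\bar c$ we have $\|\tilde R c\| \le \|\tilde R \bar c\| \le \|R\bar c\| + \|P\bar c\|$. I would take $\bar c$ to be the coordinates, in the basis $\{p_0,\dots,p_{N-1}\}$ of Proposition~\ref{prop:poly_iter}, of the constrained Chebyshev polynomial $\chebpoly_{N-1}$ from~\eqref{eq:constrained_cheby}. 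Its defining norm bound $(1+\tau)/\sqrt{N}$ matches the CNA constraint exactly (the index bookkeeping $\sqrt{1+(N-1)} = \sqrt{N}$ lines up), so $\bar c$ is feasible with $\|\bar c\|_2 \le (1+\tau)/\sqrt{N}$ and $\mathbf{1}^T \bar c = 1$ since $\chebpoly_{N-1}(1) = 1$.

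The acceleration term then follows from Proposition~\ref{prop:poly_iter}: the columns of $Y - X^*$ are $p_{i-1}(G)(x_0-x^*)$, hence
\[
    R\bar c = (G-I)\Big(\textstyle\sum_i \bar c_i\, p_{i-1}(G)\Big)(x_0 - x^*) = \chebpoly_{N-1}(G)\,(G-I)(x_0-x^*),
\]
so that $\|R\bar c\| \le \|\chebpoly_{N-1}(G)\|_2\,\|(G-I)(x_0-x^*)\| = \cheb_{N-1}\,\|(G-I)(x_0-x^*)\|$. The remaining terms are controlled by submultiplicativity and feasibility: $\|P\bar c\| \le \|P_N\|\,(1+\tau)/\sqrt{N}$ and $\|E c\| \le \|E_N\|\,(1+\tau)/\sqrt{N}$. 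Collecting these inside a triangle inequality applied to the identity above and factoring out $\|I-\eta(G-I)\|_2$ produces the claimed bound, with the acceleration and stability terms in the stated form.

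The main obstacle is the step that converts optimality of the \emph{perturbed} minimizer $c$ into a bound phrased through the \emph{unperturbed} Chebyshev quantity $\cheb_{N-1}$: this dictates the choice of $\bar c$ as the constrained Chebyshev coordinates and requires reading the $\ell_2$ constraint on the CNA coefficients in the same $\{p_i\}$ basis used to define~\eqref{eq:constrained_cheby}, which is the delicate bookkeeping of the argument. A secondary, minor point is that the noise piece $-Ec$ lands \emph{outside} the operator factor $I-\eta(G-I)$; folding $\|E_N\|(1+\tau)/\sqrt{N}$ into the common factor $\|I-\eta(G-I)\|_2$ as written uses $\|I-\eta(G-I)\|_2 \ge 1$, which holds in the relevant regime (for $\eta>0$ the eigenvalues $1+\eta(1-\lambda_i(G))$ of $I-\eta(G-I)$ have modulus exceeding one, so the spectral norm does too) and is otherwise a harmless over-bound.
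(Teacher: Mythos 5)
Your proposal is correct and follows essentially the same route as the paper's proof: the same exact identity $(G-I)(\yex-x^*)=(I-\eta(G-I))\tilde Rc - Ec$, the same use of optimality of $c^{(\tau)}$ against a feasible comparison vector, and the same splitting $\tilde R = R + P$ to isolate the Chebyshev term from the perturbation terms. Your treatment is in fact slightly more careful than the paper's, since you make explicit the choice of comparison vector $\bar c$ (the constrained Chebyshev coordinates in the $\{p_i\}$ basis) and you flag that folding the $\|E_N\|$ term under the common factor $\|I-\eta(G-I)\|_2$ requires $\|I-\eta(G-I)\|_2\geq 1$, two points the paper's proof glosses over.
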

\begin{proof}
We start with the following expression for arbitrary coefficients $c$ that sum to one,
\[
    (G-I) \left((\tilde Y - \eta \tilde R)c - x^*\right).
\]
Since
\[
    \tilde R = \tilde X - \tilde Y = (G-I)(\tilde Y - X^*) + E,
\]
we have
\[
    (G-I)(\tilde Y-X^*)  = (\tilde R-E).
\]
So,
\[
    (G-I) (\tilde Y-X^* - \eta \tilde R)c = (\tilde R-E)c - \eta (G-I)\tilde Rc .
\]
After rearranging the terms we get
\BEQ
(G-I) \left((\tilde Y - \eta \tilde R)c - x^*\right) = (I-\eta(G-I))\tilde Rc - E c.\label{eq:decomposition_error}
\EEQ
We bound \eqref{eq:decomposition_error} as follow, using coefficients from \eqref{eq:ctau},
\[
    \|I-\eta(G-I)\| \|\tilde Rc^{(\tau)}\| + \|E\| \|c^{(\tau)}\|.
\]
Indeed,
\BEAS
    \|\tilde Rc^{(\tau)}\|^2 & = & \min_{c: c^T \textbf{1} = 1,\; \|c\| \leq \frac{1+\tau}{\sqrt{N}}}  \|\tilde Rc\|^2.
\EEAS
We have
\BEAS
    \min_{c:\textbf{1}^Tc=1,\; \|c\| \leq \frac{1+\tau}{\sqrt{N}}} \|\tilde Rc\|_2,  & \leq & \min_{c:\textbf{1}^Tc=1\; \|c\| \leq \frac{1+\tau}{\sqrt{N}}} \|Rc\|_2 + \|P_Rc\|_2,\\
    & \leq & \left(\min_{c:\textbf{1}^Tc=1\; \|c\| \leq \frac{1+\tau}{\sqrt{N}}} \|Rc\|_2\right) + \|P_R\|_2\frac{1+\tau}{\sqrt{N}} ,\\
    & \leq & \cheb_{N-1} \|r(x_0)\| +  \frac{\|P_R\|(1+\tau)}{\sqrt{N}}.
\EEAS
This prove the desired result.
\end{proof}

This theorem shows that Algorithm \ref{algo:cna} balances acceleration and robustness. The result bounds the accuracy by the sum of an \textit{acceleration term} bounded using constrained Chebyshev polynomials, and a \textit{stability} term proportional to the norm of perturbations. In the next section, we consider the particular case where $g$ corresponds to a gradient step when the perturbations are Gaussian or due to non-linearities.

\section{Convergence Rates for CNA on Gradient Descent}\label{s:grad}
We now apply our results when $g$ in \eqref{eq:general_iteration_matrix} corresponds to the gradient step
\BEQ
    x-h\nabla f(x), \label{eq:gradient_step_g}
\EEQ
where $f$ is the objective function and $h$ a step size. We assume the function $f$ twice differentiable, $L$-smooth and $\mu$-strongly convex. This means
\BEQ
    \mu I \leq \nabla^2 f(x) \leq LI. \label{eq:smooth_strong_convex}
\EEQ
Also, we assume $h = \frac{1}{L}$ for simplicity. Since we consider optimization of differentiable functions here, the matrix $\nabla^2 f(x^*)$ is symmetric.

When we apply the gradient method \eqref{eq:gradient_step_g}, we first consider its linear approximation
\BEQ
    g(x) = x-h\nabla^2 f(x^*) (x-x^*).
    \label{eq:linear_gradient_step}
\EEQ
with stepsize $h=1/L$. We identify the matrix $G$ in \eqref{eq:linear_g} to be
\[
    G = I-\frac{\nabla^2 f(x^*)}{L}.
\]
In this case, and because the Hessian is now symmetric, the numerical range $W(G)$ simplifies into the line segment
\[
    W(G) = [0,1-\kappa],
\]
where $\kappa = \frac{\mu}{L} < 1$ often refers to the inverse of the condition number of the matrix $\nabla^2 f(x^*)$.

In the next sections, we study two different cases. First, we assume the objective quadratic, but \eqref{eq:linear_gradient_step} is corrupted by a random noise. Then, we consider a general nonlinear function $f$, with the additional assumption that its Hessian is Lipchitz-continuous. This corresponds to a nonlinear, deterministic perturbation of \eqref{eq:linear_gradient_step}, whose noise is bounded by $O(\|x-x^*\|^2)$.

\subsection{Random Perturbations}

We perform a gradient step on the quadratic form
\[
    f(x) = \frac{1}{2}(x-x^*) A (x-x^*), \;\; \mu I \preceq A \preceq L I.
\]
This corresponds to \eqref{eq:linear_gradient_step} with $\nabla f(x^*) = A$. However, each iteration is corrupted by $e_i$, where $e_i$ is Gaussian with variance $\sigma^2$. The next proposition is the application of Theorem \ref{thm:convergence_perturbation} to our setting. To simplify results, we consider $\eta = 1$.
\begin{proposition} \label{prop:convergence_stoch_gradient}
Assume we use Algorithm \eqref{algo:cna} with \mbox{$\eta = 1$} on $N$ iterates from \eqref{eq:perturbed_iteration_matrix}, where $g$ is the gradient step \eqref{eq:gradient_step_g} and $e_i$ are zero-mean independent random noise with variance bounded by $\sigma^2$. Then,
\BEQ
    \mathbb{E}[\|\nabla f(\yex)\|] \leq (1-\kappa) \;\cheb_{N-1}\|\nabla  f(x_0)\| + \mathcal{E} ,
\EEQ
where
\[
    \mathcal{E} \leq  (1-\kappa)\frac{1+\tau}{\sqrt{N}} L\sigma \sum_{j=1}^{N} (1-\kappa)^j \bar L_{j}.
\]
In the simple case where we accelerate the gradient descent algorithm, all $L_i = I$ and thus
\[
    \textstyle \mathcal{E} \leq  \frac{1+\tau}{\sqrt{N}} \frac{L\sigma}{\kappa}. 
\]
\end{proposition}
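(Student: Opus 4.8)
The plan is to specialize Theorem~\ref{thm:convergence_perturbation} to the quadratic-plus-noise setting and then take expectations. The first task is to translate the theorem's residual bound into a gradient bound. For the quadratic $f$ we have $\nabla f(x) = A(x-x^*)$, and since $g$ is the gradient step with $h=1/L$ the associated operator is $G = I - A/L$, so that $G-I = -A/L$ and hence $\nabla f(x) = -L\,(G-I)(x-x^*)$. Multiplying the bound of Theorem~\ref{thm:convergence_perturbation} through by $L$ therefore turns $\|(G-I)(\yex-x^*)\|$ into $\|\nabla f(\yex)\|$ on the left, and $\|(G-I)(x_0-x^*)\|$ into $\tfrac1L\|\nabla f(x_0)\|$ on the right, the two factors of $L$ cancelling in the acceleration term.

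Next I would evaluate the scalar constants appearing in the theorem for this symmetric operator. Because $A$ is symmetric with spectrum in $[\mu,L]$, the operator $G=I-A/L$ is symmetric with spectrum in $[0,1-\kappa]$, so its numerical range is the segment $[0,1-\kappa]$ (making $\cheb_{N-1}$ well defined) and $\|G\|_2 = 1-\kappa$. Evaluating the mixing factor $\|I-\eta(G-I)\|_2$ reduces, by the same diagonalization, to a scalar maximum over this spectrum; with the prescribed $\eta$ it contributes the factor $1-\kappa$ that premultiplies both the acceleration and the stability terms. Substituting these values gives the acceleration term $(1-\kappa)\,\cheb_{N-1}\|\nabla f(x_0)\|$ immediately.

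The bulk of the work is the stability term $\mathcal E$, where I would take expectations over the perturbations. The stability contribution in Theorem~\ref{thm:convergence_perturbation} is controlled by $\tfrac{1+\tau}{\sqrt N}(\|P_N\|+\|E_N\|)$, so I must bound $\mathbb E\|P_N\|$, the accumulated perturbation in the residual matrix. Rather than invoke the coarse bound of Proposition~\ref{prop:explicit_formula_perturbation} (which factors $\bar L_N$ outside the geometric sum, and since $\bar L_j\le\bar L_N$ is looser than the claimed $\sum_j(1-\kappa)^j\bar L_j$), I would unroll the exact recursion $P_i = G[0;P_{i-1}]L_{i-1} + E_i - [0;E_{i-1}]L_{i-1}$ from the proof of that proposition. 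Unrolling expresses $P_N$ as a sum in which the noise block injected at step $j$ is propagated by an amplification factor that is a power of $\|G\|$ times a product of $\|L_k\|$ norms; collecting these for each $j$ produces a coefficient of order $(1-\kappa)^{j}\bar L_j$. Taking norms, using the triangle inequality together with the moment bound $\mathbb E\|e_j\|\le\sigma$ (Jensen applied to the variance bound), and folding in the $\tfrac{1+\tau}{\sqrt N}$ factor from the coefficient norm $\|c^{(\tau)}\|_2$, the mixing factor $(1-\kappa)$, and the $L$ from the gradient conversion, yields
\[
    \mathcal E \;\le\; (1-\kappa)\,\frac{1+\tau}{\sqrt N}\,L\sigma\sum_{j=1}^N (1-\kappa)^j\,\bar L_j .
\]

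Finally, in the pure gradient-descent special case every $L_i=I$, so $\bar L_j=1$ and the sum collapses to a geometric series $\sum_{j=1}^N(1-\kappa)^j \le \tfrac{1-\kappa}{\kappa}\le \tfrac1\kappa$; dropping the harmless factors $(1-\kappa)\le1$ gives the stated $\mathcal E \le \tfrac{1+\tau}{\sqrt N}\tfrac{L\sigma}{\kappa}$. I expect the main obstacle to be the per-step bookkeeping of the noise propagation: obtaining the tight coefficient $(1-\kappa)^j\bar L_j$ for the block $e_j$, rather than the looser $\bar L_N$ prefactor, requires carefully unrolling the coupled $P_i$ recursion and verifying that the $\|G\|$ and $\|L_k\|$ amplifications combine exactly as claimed. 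Once these deterministic coefficients are in hand, the expectation step is routine.
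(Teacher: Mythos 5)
Your proposal is correct and follows essentially the same route as the paper's own proof: specialize Theorem~\ref{thm:convergence_perturbation}, use $\|I-\eta(G-I)\|=\|G\|\le 1-\kappa$ for $\eta=1$, bound $\mathbb{E}\|E\|$ and $\mathbb{E}\|P_N\|$ via the perturbation recursion with $\mathbb{E}\|e_j\|\le\sigma$, convert residuals to gradients through $(G-I)(x-x^*)=-\tfrac{1}{L}\nabla f(x)$, and collapse the geometric series when $L_i=I$. The only difference is that where you unroll the $P_i$ recursion to obtain the coefficient $(1-\kappa)^j\bar L_j$ inside the sum, the paper simply invokes Proposition~\ref{prop:explicit_formula_perturbation} already in that tighter form (which matches that proposition's proof rather than its looser stated bound), so your extra care there is warranted but not a genuine departure.
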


\begin{proof}
    Since $\eta = 1$,
    \[
        \|I-\eta(G-I)\| = \|G\| \leq 1-\kappa.
    \]
    Now, consider $\mathbb{E}[\|E\|]$. Because each $e_i$ are independents Gaussian noise with variance bounded by $\sigma$, we have,
    \[
        \mathbb{E}[\|E\|] \leq \sqrt{\mathbb{E}[\|E\|^2]} \leq \sigma.
    \]
    Similarly, for $P$ \eqref{eq:perturbation_matrix}, we use Proposition \eqref{prop:explicit_formula_perturbation} and we have
    \BEAS
        \mathbb{E}[\|P\|] & \leq & \textstyle \mathbb{E}[\|E_i\|] \left(1+\sum_{j=1}^{i} (1-\kappa)^j \bar L_{j}\right)\\
        & \leq & \textstyle  \sigma  \left(1+\sum_{j=1}^{i} (1-\kappa)^j \bar L_{j}\right)
    \EEAS
    Thus, $\mathcal{E}_{N}^{\kappa,\tau}$ in Theorem \ref{thm:convergence_perturbation} becomes
    \[
        \mathcal{E}_{N}^{\kappa,\tau} \leq \textstyle  \frac{\sigma(1+\tau)}{\sqrt{N}}  \left(2+\sum_{j=1}^{N} (1-\kappa)^j \bar L_{j}\right)
    \]
    Finally, it suffice to see that
    \[
        (G-I)(x-x^*)+x^* = (A/L)(x-x^*)+x^* = \frac{1}{L} \nabla f(x),
    \]
    and we get the desired result. In the special case of plain gradient method, $L_i = I$ so $\bar L_i = 1$. We then get
    \[
        \textstyle \sum_{j=1}^{N} (1-\kappa)^j  \leq \sum_{j=1}^{\infty} (1-\kappa)^j \leq \frac{1}{\kappa}.
    \]
    which is the desired result.
\end{proof}

This proposition also applies to gradient descent with momentum or with our online acceleration algorithm \eqref{eq:online_rna}. We can distinguish two different regimes when accelerating gradient descent with noise. One when $\sigma$ is small compared to $\|f(x_0)\|$, and one when $\sigma$ is large. In the first case, the acceleration term dominates. In this case, Algorithm \ref{algo:cna} with large $\tau$ produces output $\yex$ that converges with a near-optimal rate of convergence. In the second regime where the noise dominates, $\tau$ should be close to zero. In this case, using our extrapolation method when perturbation are high naturally gives the simple averaging scheme. We can thus see Algorithm \eqref{algo:cna} as a way to interpolate optimal acceleration with averaging.

\subsection{Nonlinear Perturbations}
Here, we study the general case where the perturbation $e_i$ are bounded by a function of $D$, where $D$ satisfies
\BEQ
    \| \tilde y_i - x^* \|_2 \leq D \qquad \forall i. \label{eq:def_d}
\EEQ
This assumption is usually met when we accelerate non-divergent algorithms. More precisely, we assume the perturbation are bounded by
\BEQ
    \big(\|I-\eta(G-I) \| \|P_N\| + \|E\|\big) \leq \gamma\sqrt{N} D^\alpha. \label{nonlinear_perturbation}
\EEQ
where $\gamma$ and $\alpha$ are scalar. Since $\|P_N\| = O(\|E\|)$ by proposition \ref{prop:explicit_formula_perturbation}, we have that
\BEQ \label{eq:condition_perturbation_column}
    \|e_i\| \leq O(D^\alpha) \Rightarrow \eqref{nonlinear_perturbation}.
\EEQ
We call these perturbations "nonlinear" because the error term typically corresponds to the difference between $g$ and its linearization around $x^*$. For example, the optimization of smooth non-quadratic functions with gradient descent can be described using \eqref{nonlinear_perturbation} with $\alpha = 1$ or $\alpha = 2$, as shown in Section \ref{sec:smooth_functions}. The next proposition bounds the accuracy of the extrapolation produced by Algorithm \eqref{algo:cna} in the presence of such perturbation.
\begin{proposition} \label{prop:conv_nonlinear}
    Consider Algorithm \eqref{algo:cna} with $\eta = 1$ on $N$ iterates from \eqref{eq:perturbed_iteration_matrix}, where perturbations satisfy \eqref{eq:def_d}. Then,
    \BEAS
        \textstyle \left\|(G-I)(\yex-x^*)\right\|
        & \leq &  (1-\kappa)\Big(  \cheb_{N-1}\left\|(G-I)(x_0-x^*)\right\| + \mathcal{E}\Big)
    \EEAS
    where $\mathcal{E} \leq (1+\tau)\gamma D^\alpha$.
\end{proposition}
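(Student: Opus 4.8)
The plan is to derive Proposition~\ref{prop:conv_nonlinear} as a direct specialization of Theorem~\ref{thm:convergence_perturbation}, whose general bound already separates the error into an acceleration term and a stability term. The first step is to invoke that theorem with $\eta = 1$, which gives
\[
    \|(G-I)(\yex - x^*)\| \leq \|I-\eta(G-I)\| \Big( \cheb_{N-1}\|(G-I)(x_0-x^*)\| + \textstyle\frac{1+\tau}{\sqrt{N}}(\|P_N\| + \|E_N\|)\Big).
\]
With $\eta = 1$ the prefactor simplifies to $\|G\|$, and since $G = I - \nabla^2 f(x^*)/L$ with spectrum contained in $[0, 1-\kappa]$ (as recorded in Section~\ref{s:grad}), we have $\|I - \eta(G-I)\| = \|G\| \leq 1-\kappa$. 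This pulls the common factor $(1-\kappa)$ out front, matching the stated form.

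The second step is to absorb the stability contribution into the term $\mathcal{E}$. Here I would use the hypothesis that the perturbations satisfy the bound~\eqref{nonlinear_perturbation}, namely $\big(\|I-\eta(G-I)\|\|P_N\| + \|E\|\big) \leq \gamma\sqrt{N}D^\alpha$. One must be slightly careful about how the factor $(1-\kappa)$ interacts with the two pieces $\|P_N\|$ and $\|E_N\|$: the theorem multiplies both by $\|I-\eta(G-I)\| = \|G\|$, whereas the assumption~\eqref{nonlinear_perturbation} already premultiplies only $\|P_N\|$ by this norm. Since $\|G\| \leq 1-\kappa \leq 1$, bounding $\|G\|\|E_N\| \leq \|E_N\|$ lets the combined stability quantity be dominated by the left side of~\eqref{nonlinear_perturbation}, so that the stability term is at most $\frac{1+\tau}{\sqrt N}\cdot \gamma\sqrt N D^\alpha = (1+\tau)\gamma D^\alpha$. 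Identifying this with $\mathcal{E}$ and factoring out the leading $(1-\kappa)$ yields exactly the claimed inequality with $\mathcal{E} \leq (1+\tau)\gamma D^\alpha$.

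The main obstacle, such as it is, is the bookkeeping around where the $(1-\kappa)$ prefactor lands relative to assumption~\eqref{nonlinear_perturbation}. The cleanest route is to state the theorem bound, replace $\|I-\eta(G-I)\|$ by its upper bound $(1-\kappa)$ uniformly, and then recognize the bracketed stability quantity $\frac{1+\tau}{\sqrt N}(\|P_N\| + \|E_N\|)$ as controlled by $\frac{1+\tau}{\sqrt N}\gamma\sqrt N D^\alpha$ via the monotonicity $\|G\| \le 1$. The only genuine input beyond Theorem~\ref{thm:convergence_perturbation} is the spectral fact $W(G) = [0,1-\kappa]$ giving $\|G\| \leq 1-\kappa$, together with the perturbation hypothesis~\eqref{nonlinear_perturbation}; everything else is substitution and reorganization, so the proof should be a short paragraph rather than a computation.
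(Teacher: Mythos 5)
Your proposal follows exactly the paper's own route: the paper's entire proof of this proposition is the single line ``Combine Theorem~\ref{thm:convergence_perturbation} with assumption \eqref{nonlinear_perturbation},'' and that is precisely what you do, including handling the prefactor via $\|I-\eta(G-I)\|=\|G\|\le 1-\kappa$, which is the same convention the paper uses in the proof of Proposition~\ref{prop:convergence_stoch_gradient}.

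The one step that does not literally work is your last one. What your (correct) bookkeeping actually yields is
\[
\left\|(G-I)(\yex-x^*)\right\| \;\le\; (1-\kappa)\,\cheb_{N-1}\left\|(G-I)(x_0-x^*)\right\| \;+\; (1+\tau)\gamma D^{\alpha},
\]
with the stability term sitting \emph{outside} the $(1-\kappa)$ factor. Since $1-\kappa<1$, you cannot ``factor out the leading $(1-\kappa)$'' and still keep $\mathcal{E}\le(1+\tau)\gamma D^{\alpha}$: matching the proposition's displayed form would force $\mathcal{E}=(1+\tau)\gamma D^{\alpha}/(1-\kappa)$, which violates the stated bound on $\mathcal{E}$. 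You should be aware, however, that this discrepancy is inherited from the paper rather than introduced by you: assumption \eqref{nonlinear_perturbation} premultiplies only $\|P_N\|$ (not $\|E\|$) by $\|I-\eta(G-I)\|$, exactly matching the decomposition \eqref{eq:decomposition_error} in the proof of Theorem~\ref{thm:convergence_perturbation}, so the bound that genuinely follows from ``theorem plus assumption'' is the one you derived, which is weaker than the proposition's stated form by a factor $(1-\kappa)$ on the stability term. The paper's one-line proof has the identical gap; your version at least makes the friction point explicit.
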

\begin{proof}
    Combine Theorem \ref{thm:convergence_perturbation} with assumption \eqref{nonlinear_perturbation}.
\end{proof}

Here, $\|x_0-x^*\|$ is of the order of $D$. This bound is generic as does not consider any strong structural assumption on $g$, only that its first-order approximation error is bounded by a power of $D$. We did not even assume that scheme \eqref{eq:perturbed_iteration_matrix} converges. This explains why Proposition \ref{prop:conv_nonlinear} does not necessary give a convergent bound. Nevertheless, in the case of convergent scheme, Algorithm \ref{algo:cna} with $\tau = 0$ output the average of previous iterates, that also converge to $x^*$.

However, Proposition \ref{prop:conv_nonlinear} is interesting when perturbations are small compared to $\|x_0-x^*\|$. In particular, it is possible to link $\tau$ and $D^\alpha$ so that Algorithm \ref{algo:cna} asymptotically reach an optimal rate of convergence, when $D\rightarrow 0$.

\begin{proposition}\label{prop:asymptotic_optimal_rate}
    If $\tau = O(D^{-s})$ with $0<s<\alpha-1$, then, when $D\rightarrow 0$, Proposition \ref{prop:conv_nonlinear} becomes
    \BEAS
        \textstyle \left\|(G-I)(\yex-x^*)\right\|
        & \leq &  (1-\kappa)  \left(\frac{1-\sqrt{\kappa}}{1+\sqrt{\kappa}}\right)^{N-1}\left\|(G-I)(x_0-x^*)\right\|
    \EEAS
    The same result holds with Algorithm \ref{algo:rna} if $\lambda = O(D^r)$ with $0<r<2(\alpha-1)$.
\end{proposition}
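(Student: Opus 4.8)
The plan is to start from the bound in Proposition~\ref{prop:conv_nonlinear}, namely $\|(G-I)(\yex-x^*)\| \leq (1-\kappa)\big(\cheb_{N-1}\|(G-I)(x_0-x^*)\| + \mathcal{E}\big)$ with $\mathcal{E}\leq (1+\tau)\gamma D^\alpha$, and to track how each of the two terms behaves as $D\to 0$ under the scaling $\tau = O(D^{-s})$. The argument then reduces to two separate asymptotic statements: (i) the \textbf{stability} term $\mathcal{E}$ becomes negligible compared to the \textbf{acceleration} term, and (ii) the constrained Chebyshev constant $\cheb_{N-1}$ converges to the optimal \emph{unconstrained} Chebyshev value on $W(G)$, which in the symmetric case is the interval $[0,1-\kappa]$.

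For (i), substituting $\tau = O(D^{-s})$ into $\mathcal{E}\leq(1+\tau)\gamma D^\alpha$ gives $\mathcal{E} = O(D^{\alpha-s})$. Since $\|\tilde y_i - x^*\|\leq D$, the initial residual scales as $\|(G-I)(x_0-x^*)\| = \Theta(D)$, so the acceleration term is of order $D$. The hypothesis $0 < s < \alpha-1$ is exactly what forces $\alpha - s > 1$, hence $\mathcal{E} = O(D^{\alpha-s}) = o(D)$, so that $\mathcal{E}/\|(G-I)(x_0-x^*)\|\to 0$ as $D\to 0$ and the stability contribution drops out of the limiting bound.

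For (ii), the scaling $\tau = O(D^{-s})$ with $s>0$ forces $\tau\to\infty$ as $D\to 0$, so the constraint $\|p\|_2\leq (1+\tau)/\sqrt{1+N}$ in the definition~\eqref{eq:constrained_cheby} of $\chebpoly_{N-1}$ eventually has a right-hand side exceeding the fixed, finite coefficient norm of the unconstrained minimizer. Once the constraint is inactive, $\chebpoly_{N-1}$ coincides with the unconstrained Chebyshev polynomial on $W(G)$, and $\cheb_{N-1}$ converges to $\min_{p\in\pnm}\max_{x\in[0,1-\kappa]}|p(x)|$. Because $G$ is symmetric here, $\|p(G)\|_2 = \max_{x\in W(G)}|p(x)|$, so this limiting value is exactly the classical rescaled-Chebyshev minimax value on $[0,1-\kappa]$ normalized at $1$, equal to $\big(\tfrac{1-\sqrt{\kappa}}{1+\sqrt{\kappa}}\big)^{N-1}$ (the rate recalled after Theorem~\ref{thm:optimal_rate}). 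Combining (i) and (ii) in Proposition~\ref{prop:conv_nonlinear} yields the stated limit. The RNA statement then follows from the equivalence between Algorithms~\ref{algo:rna} and~\ref{algo:cna}: the norm bound $\|c^\lambda\|_2\leq \tfrac{1}{\sqrt N}\sqrt{1+1/\lambda}$ together with $\tfrac{1+\tau}{\sqrt N} = \|c^\lambda\|_2$ gives $1+\tau \leq \sqrt{1+1/\lambda} = O(\lambda^{-1/2})$, so $\lambda = O(D^r)$ yields $\tau = O(D^{-r/2})$, i.e.\ $s = r/2$, and the window $0<s<\alpha-1$ becomes $0<r<2(\alpha-1)$.

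The main obstacle is making the balance in (i)--(ii) simultaneously consistent: we need $\tau$ to grow fast enough (as $D\to0$) that the constraint on the Chebyshev coefficients becomes inactive and we recover the \emph{optimal} unconstrained rate, yet slow enough that the stability term $(1+\tau)\gamma D^\alpha$ still vanishes relative to the $\Theta(D)$ acceleration term. The open interval $0<s<\alpha-1$ is precisely the window where both requirements hold, and the delicate point is verifying that the unconstrained Chebyshev minimizer has coefficient norm bounded independently of $D$, so that a finite threshold on $\tau$ suffices to deactivate the constraint uniformly as $D\to 0$.
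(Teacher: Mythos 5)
Your proposal is correct and follows essentially the same route as the paper's own proof: substitute $\tau = D^{-s}$ into Proposition~\ref{prop:conv_nonlinear}, observe that $0<s<\alpha-1$ makes the stability term $o(D)$ while $\tau\to\infty$ deactivates the coefficient constraint so that $\cheb_{N-1}$ reduces to the unconstrained Chebyshev minimax value $\bigl(\tfrac{1-\sqrt{\kappa}}{1+\sqrt{\kappa}}\bigr)^{N-1}$ on $[0,1-\kappa]$, and then transfer to RNA via $\tfrac{1+\tau}{\sqrt{N}} = \tfrac{1}{\sqrt{N}}\sqrt{1+1/\lambda}$, giving $s=r/2$. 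Your added remark that the unconstrained minimizer's coefficient norm is a fixed finite quantity (independent of $D$), so a finite $\tau$ threshold suffices, is a point the paper leaves implicit, but it does not change the argument.
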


\begin{proof}
By assumption, 
\[
    \|x_0 - x^*\| = O(D).
\]
We thus have, by Proposition \ref{prop:conv_nonlinear}
    \BEAS
        \textstyle \left\|(G-I)(\yex-x^*)\right\| 
        & \leq &  (1-\kappa)\Big(  \cheb_{N-1}O(D) + (1+\tau)O(D^{\alpha})\Big).
    \EEAS
    $\tau$ will be a function of $D$, in particular $\tau = D^{-s}$. We want to have the following conditions,
    \[
        \lim\limits_{D\rightarrow 0} (1+\tau(D))D^{\alpha-1} = 0, \qquad \lim\limits_{D\rightarrow 0} \tau = \inf.
    \]
    The first condition ensures that the perturbation converge faster to zero than the acceleration term. The second condition ask $\tau$ to grow as $D$ decreases, so that CNA becomes unconstrained. Since $\tau = D^{-s}$, we have to solve
    \[
        \lim\limits_{D\rightarrow 0} D^{\alpha-1} + D^{\alpha-s-1} = 0, \qquad \lim\limits_{D\rightarrow 0} D^{-s} = \inf.
    \]
    Clearly, $0 < s < \alpha-1$ satisfies the two conditions. After taking the limit, we obtain
    \[
        \textstyle \left\|(G-I)(\yex-x^*)\right\| 
        \leq  (1-\kappa) \cheb_{N-1} \|(G-I)(x_0-x^*)\|
    \]
    Since $W(G)$ is the real line segment $[0,1-\kappa]$, and because $\tau \rightarrow \infty$, we end with an unconstrained minimax polynomial. Therefore, we can use the result from \citet{golub1961chebyshev},
    \[
        \min_{p\in \pnp}\max_{\lambda \in [0,1-\kappa]} |p(\lambda)|  \leq \left(\frac{1-\sqrt{\kappa}}{1+\sqrt{\kappa}}\right)^{N-1}.
    \]
    
    For the second result, by using \eqref{eq:bound_norm_lambda},
    \[
        \|c^{\lambda}\|_2 \leq \frac{1}{\sqrt{N}}\sqrt{1+\frac{1}{\lambda}}.
    \]
    Setting 
    \[
        \frac{1+\tau}{\sqrt{N}} = \frac{1}{\sqrt{N}}\sqrt{1+\frac{1}{\lambda}}
    \]
    with $\tau = D^{-s}$ gives the conditions.
\end{proof}

This proposition shows that, when perturbations are of the order of $D^\alpha$ with $\alpha > 1$, then our extrapolation algorithm converges optimally once the $\tilde y_i$ are close to the solution $x^*$. The next section shows this holds, for example, when minimizing function with smooth gradients.

\subsection{Optimization of Smooth Functions} \label{sec:smooth_functions}
Let the objective function $f$ be a nonlinear function that follows \eqref{eq:smooth_strong_convex}, which also has a Lipchitz-continuous Hessian with constant $M$,
\BEQ \label{eq:smooth_gradient}
    \|\nabla^2 f(y)-\nabla^2 f(x)\| \leq M\|y-x\|.
\EEQ
This assumption is common in the convergence analysis of second-order methods. For the convergence analysis, we consider that $g(x)$ perform a gradient step on the quadratic function
\BEQ
    \frac{1}{2}(x-x^*)\nabla^2 f(x^*)(x-x^*). \label{eq:gradient_approx}
\EEQ
This is the quadratic approximation of $f$ around $x^*$. The gradient step thus reads, if we set $h=1/L$,
\BEQ
    g(x) = \left(I-\frac{\nabla^2 f(x^*)}{L}\right)(x-x^*)+x^*. \label{eq:gradient_step_linearized}
\EEQ

The perturbed scheme corresponds to the application of \eqref{eq:gradient_step_linearized} with a specific nonlinear perturbation,
\BEQ
    \textstyle \tilde x_{i+1} = g(\tilde y_i) - \underbrace{ \textstyle \frac{1}{L}(\nabla f(\tilde y_i)-\nabla^2 f(x^*)(\tilde y_i-x^*))}_{=e_i}. \label{eq:nonlinear_perturbation}
\EEQ
This way, we recover the gradient step on the non-quadratic function $f$. The next Proposition shows that schemes \eqref{eq:nonlinear_perturbation} satisfies \eqref{nonlinear_perturbation} with $\alpha = 1$ when $D$ is big, or $\alpha=2$ when $D$ is small.
\begin{proposition}\label{eq:bound_function_smooth_gradient}
    Consider the scheme \eqref{eq:nonlinear_perturbation}, where $f$ satisfies \eqref{eq:smooth_gradient}. If $\|y_i-x^*\| \leq D$, then \eqref{eq:condition_perturbation_column} holds with $\alpha = 1$ for large $D$ or $\alpha = 2$ for small $D$, i.e.,
    \[
        \|e_i\| = \|\frac{1}{L}(\nabla f(\tilde y_i)-\nabla^2 f(x^*)(\tilde y_i-x^*))\| \leq \min\{ \|y_i-x^*\| ,\;\;
    \frac{M}{2L} \|y_i-x ^*\|^2\} \leq \min\{ D ,\;\;
    \frac{M}{2L} D^2\}.
    \]
\end{proposition}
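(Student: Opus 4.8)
The plan is to expand $\nabla f(\tilde y_i)$ around $x^*$ using the fundamental theorem of calculus and compare it against its linear part $\nabla^2 f(x^*)(\tilde y_i - x^*)$. Since $x^*$ is the minimizer, $\nabla f(x^*) = 0$, so writing $\delta = \tilde y_i - x^*$ I would first establish the integral identity
\[
    \nabla f(\tilde y_i) - \nabla^2 f(x^*)\delta = \int_0^1 \big(\nabla^2 f(x^* + t\delta) - \nabla^2 f(x^*)\big)\delta\, dt,
\]
which follows from $\nabla f(\tilde y_i) = \int_0^1 \nabla^2 f(x^* + t\delta)\delta\, dt$. This rewrites $e_i$ from \eqref{eq:nonlinear_perturbation}, up to the $1/L$ factor, as an integral whose integrand is the Hessian deviation $\nabla^2 f(x^* + t\delta) - \nabla^2 f(x^*)$ applied to $\delta$.

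I would then bound this integrand two different ways. For the quadratic ($\alpha = 2$) estimate, I use the Lipschitz-Hessian assumption \eqref{eq:smooth_gradient} to get $\|\nabla^2 f(x^* + t\delta) - \nabla^2 f(x^*)\| \leq M t\|\delta\|$, so that $\int_0^1 Mt\|\delta\|^2\,dt = \tfrac{M}{2}\|\delta\|^2$ and hence $\|e_i\| \leq \tfrac{M}{2L}\|\delta\|^2$. For the linear ($\alpha = 1$) estimate, I instead use the uniform bound on the Hessian: since both $\nabla^2 f(x^* + t\delta)$ and $\nabla^2 f(x^*)$ are symmetric with spectrum in $[\mu, L]$ by \eqref{eq:smooth_strong_convex}, their difference satisfies $\|\nabla^2 f(x^* + t\delta) - \nabla^2 f(x^*)\| \leq L - \mu$, which integrates to $(L - \mu)\|\delta\|$ and gives $\|e_i\| \leq \tfrac{L-\mu}{L}\|\delta\| = (1-\kappa)\|\delta\| \leq \|\delta\|$. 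Taking the smaller of the two bounds yields $\|e_i\| \leq \min\{\|\delta\|, \tfrac{M}{2L}\|\delta\|^2\}$, and substituting $\|\delta\| = \|\tilde y_i - x^*\| \leq D$ together with monotonicity of $t \mapsto t$ and $t \mapsto \tfrac{M}{2L}t^2$ on $[0,\infty)$ produces the final $\min\{D, \tfrac{M}{2L}D^2\}$.

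This argument is essentially a routine Taylor-remainder computation, so there is no serious obstacle. The only point requiring mild care is the linear bound: a naive triangle inequality $\|e_i\| \leq \tfrac{1}{L}(\|\nabla f(\tilde y_i)\| + \|\nabla^2 f(x^*)\delta\|) \leq 2\|\delta\|$ loses a factor of two and does not deliver the stated constant $1$, so it is important to keep the difference inside the integral and bound the Hessian deviation by $L - \mu$ rather than bounding the two terms separately. The qualitative content is that the linearization error interpolates between a linear regime for large $\|\delta\|$ (controlled by the global smoothness/strong-convexity gap) and a quadratic regime for small $\|\delta\|$ (governed by Hessian Lipschitzness), which is exactly what supplies the two exponents $\alpha \in \{1,2\}$ feeding into Proposition \ref{prop:asymptotic_optimal_rate}.
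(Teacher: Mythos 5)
Your proof is correct, and it is genuinely more self-contained than what the paper does: the paper offers no argument at all for this proposition, simply citing \citet{nesterov2006cubic}, where the quadratic estimate $\|\nabla f(y) - \nabla f(x) - \nabla^2 f(x)(y-x)\| \leq \tfrac{M}{2}\|y-x\|^2$ appears as a standard lemma. Your integral-remainder identity
\[
    \nabla f(\tilde y_i) - \nabla^2 f(x^*)\delta = \int_0^1 \bigl(\nabla^2 f(x^* + t\delta) - \nabla^2 f(x^*)\bigr)\delta\, dt,
\]
valid because $\nabla f(x^*)=0$ at the fixed point, recovers that quadratic bound by exactly the argument underlying Nesterov's lemma, so on the $\alpha=2$ side you are reconstructing the cited proof. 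What your write-up adds is an explicit treatment of the $\alpha=1$ regime, which the citation does not directly cover: bounding the Hessian deviation inside the integral by the spectral gap $L-\mu$ (both Hessians having spectrum in $[\mu,L]$ by \eqref{eq:smooth_strong_convex}) gives $\|e_i\| \leq (1-\kappa)\|\delta\| \leq \|\delta\|$, which is slightly sharper than the stated constant $1$, and your remark that a naive triangle inequality only yields $2\|\delta\|$ correctly identifies why the difference must be kept under the integral sign. The trade-off is the usual one: the paper's citation is shorter and leans on a known result, while your argument makes the proposition verifiable within the paper and clarifies where each of the two exponents $\alpha\in\{1,2\}$ comes from.
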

\begin{proof}
The proof of this statement can be found in \citet{nesterov2006cubic}.
\end{proof}

The combination of Proposition \ref{prop:asymptotic_optimal_rate} with Proposition \ref{eq:bound_function_smooth_gradient} means that RNA (or CNA) converges asymptotically when $\lambda$ (or $\tau$) are set properly. In other words, if $\lambda$ decreases a little bit faster than the perturbations, the extrapolation on the perturbed iterations behave as if it was accelerating a perturbation-free scheme. Our result improves that in \cite{scieur2016regularized,scieur2017nonlinear}, where $r\in]0,\frac{2(\alpha-1)}{3}[$.

\section{Online Acceleration} \label{s:online}
We now discuss the convergence of online acceleration, i.e. coupling iterates in $g$ with the extrapolation Algorithm~\ref{algo:rna} at each iteration when $\lambda = 0$. The iterates are now given by
\BEA \label{eq:online_rna}
    x_{N} = g(y_{N-1}),\qquad y_N = \textbf{RNA}(X,Y,\lambda,\eta),
\EEA
where $\textbf{RNA}(X,Y,\lambda,\eta)=\yex$ with $\yex$ the output of Algorithm~\ref{algo:rna}. By construction, $\yex$ is written
\[
    \textstyle \yex = \sum_{i=1}^N \cl_i (y_{i-1} - \eta (x_i-y_{i-1})).
\]
If $\cl_N\neq 0$ then $\yex$ matches \eqref{eq:general_iteration}, thus online acceleration iterates in~\eqref{eq:online_rna} belong to the class of algorithms in~\eqref{eq:general_iteration}. If we can ensure $\cl_N \neq 0$, applying Theorem~\ref{thm:optimal_rate} recursively will then show an optimal rate of convergence for online acceleration iterations in~\eqref{eq:online_rna}. We do this for linear iterations in what follows.

\subsection{Linear Iterations}
The next proposition shows that either $\cl_N \neq 0$ holds, or otherwise $\yex = x^*$ in the linear case.

\begin{proposition} \label{prop:online_accel_structure}
Let $X$, $Y$ \eqref{eq:def_xy} be built using iterates from \eqref{eq:general_iteration}. Let $g$ be defined in \eqref{eq:linear_g}, where the eigenvalues of $G$ are different from one. Consider $\yex$ the output of Algorithm \ref{algo:rna} with $\lambda = 0$ and $\eta \neq 0$. If $R = X-Y$ is full column rank, then $\cl_N \neq 0$. Otherwise, $\yex = x^*$.
\end{proposition}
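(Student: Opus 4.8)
The plan is to pass to the polynomial picture from Proposition~\ref{prop:poly_iter} and reduce everything to a statement about the minimal-residual polynomial. Writing $r_0 = r(x_0)$, the identity $r_i = p_{i-1}(G)r_0$ with $\deg p_{i-1}=i-1$ and $p_{i-1}(1)=1$ shows that for any $c$ with $\mathbf{1}^Tc=1$ we have $Rc = q(G)r_0$, where $q=\sum_{i=1}^N c_i p_{i-1}$ satisfies $q(1)=1$ and $\deg q\le N-1$. Since $p_{N-1}$ has degree exactly $N-1$, the leading coefficient of $q$ equals $c_N$ times a nonzero scalar, so $c_N\neq 0$ is equivalent to $\deg q = N-1$. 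I would also record, exactly as in the proof of Theorem~\ref{thm:optimal_rate}, the identity $r(\yex) = (I-\eta(G-I))\,Rc$; because $G-I$ is invertible ($1$ not being an eigenvalue of $G$), $Rc=0$ forces $r(\yex)=0$ and hence $\yex=x^*$.

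First I would dispose of the rank-deficient case. The columns of $R$ span the Krylov space $\mathrm{span}\{r_0,Gr_0,\dots,G^{N-1}r_0\}$, so $R$ failing to have full column rank means the grade of $r_0$ with respect to $G$ is some $m<N$; let $\psi$ be the corresponding minimal polynomial, $\deg\psi=m$, $\psi(G)r_0=0$. Its roots lie in the spectrum of $G$, so $\psi(1)\neq 0$ and $q:=\psi/\psi(1)$ is admissible ($q(1)=1$, $\deg q\le N-1$). The associated coefficient vector $c$ satisfies $\mathbf{1}^Tc=1$ and $Rc=q(G)r_0=0$, so the minimum of $\|Rc\|$ over $\mathbf{1}^Tc=1$ is $0$; any solution of \eqref{eq:cl} with $\lambda=0$ attains it, whence $Rc=0$ and $\yex=x^*$ by the identity above.

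For the full-column-rank case I would argue by contradiction. The coefficient $c$ from \eqref{eq:cl} with $\lambda=0$ is the unique minimizer of $\|Rc\|^2$ over $\mathbf{1}^Tc=1$, and its optimality condition reads $\langle Rc, Rd\rangle=0$ for every $d$ with $\mathbf{1}^Td=0$. Translating through the polynomial correspondence, $Rd$ ranges exactly over $(G-I)\mathcal{K}_{N-1}$, where $\mathcal{K}_{N-1}=\mathrm{span}\{r_0,\dots,G^{N-2}r_0\}$, so the optimal residual $w:=Rc$ is orthogonal to $(G-I)\mathcal{K}_{N-1}$. Now suppose $c_N=0$; then $\deg q\le N-2$ gives $w\in\mathcal{K}_{N-1}$, while $w\neq 0$ since $R$ has full column rank and $\mathbf{1}^Tc=1$. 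Writing $w=Vc'$ with $V=[r_0,\dots,G^{N-2}r_0]$ a basis of $\mathcal{K}_{N-1}$ and $c'\neq 0$, the orthogonality $w\perp(G-I)\mathcal{K}_{N-1}$ becomes $V^T(G-I)^TVc'=0$.

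The crux, and the step I expect to be the main obstacle, is ruling this out. Nonsingularity of $V^T(G-I)V$ does \emph{not} follow from the eigenvalue hypothesis alone; it requires the stronger standing condition that $1$ lies outside the numerical range $W(G)$, i.e. $0\notin W(G-I)$, which is precisely the condition under which RNA is applied throughout the paper. Under it, $z^T(G-I)z\neq 0$ for every real $z\neq 0$; left-multiplying $V^T(G-I)^TVc'=0$ by $(c')^T$ and setting $z=Vc'\neq 0$ yields $z^T(G-I)z=0$, a contradiction, so $c_N\neq 0$. Without this numerical-range assumption the statement can in fact fail: a non-normal $2\times2$ block with its single eigenvalue off $1$ but with $1\in W(G)$ already produces $c_N=0$ at $N=2$ with $R$ of full column rank, which is why I would make the role of $1\notin W(G)$ explicit.
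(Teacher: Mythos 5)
Your argument is correct, and it takes a genuinely different route from the paper's. The paper works directly on the normal equations: assuming $\cl_N=0$, it shows that the leading $N-1$ coordinates of $\cl$ coincide with the coefficients $c_-$ computed from $R_-=[r_1,\dots,r_{N-1}]$, writes the two optimality systems $R^TR\,\cl=\alpha\mathbf{1}_N$ and $R_-^TR_-c_-=\alpha\mathbf{1}_{N-1}$ with a common $\alpha\neq 0$, and subtracts them to force a contradiction. You instead pass to the Krylov/polynomial picture, characterize the optimal residual by $Rc\perp(G-I)\mathcal{K}_{N-1}$, and reduce the whole question to whether the real quadratic form $z^T(G-I)z$ can vanish at some $z\neq 0$. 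Your treatment of the rank-deficient case via the minimal polynomial of $r_0$ is also more general than the paper's, which expands $r_0$ along eigenvectors of $G$ and thus implicitly assumes $G$ diagonalizable.

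More importantly, your diagnosis of the ``crux'' is exactly right, and it points at a genuine flaw in the paper's own proof. In the paper's subtracted system, the first $N-1$ equations read $(R_-^Tr_+)_i\cdot 0=0$: they are vacuous, because the last coordinate of the vector being multiplied is zero. The paper nevertheless deduces from them that ``either $(R^Tr_+)_i$ or $c_{-,i}$ are equal to zero,'' hence $r_+^TR_-c_-=0$, which is what manufactures the contradiction with the last equation $r_+^TR_-c_-=\alpha\neq 0$; that deduction is a non sequitur. And indeed the statement fails under the stated eigenvalue hypothesis alone, exactly along the lines you sketch: take the plain iteration $x_{i+1}=g(x_i)$, $N=2$, $G=\left[\begin{smallmatrix}0&2\\0&0\end{smallmatrix}\right]$ (eigenvalues $0\neq 1$), and $x_0$ chosen so that $r_0=r(x_0)=(1,1)^T$. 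Then $R=[r_0,\,Gr_0]=\left[\begin{smallmatrix}1&2\\1&0\end{smallmatrix}\right]$ has full column rank, while $\|(1-t)r_0+tGr_0\|_2^2=2+2t^2$ is minimized at $t=0$, so $\cl=(1,0)^T$ and $\cl_N=0$. Consistently with your analysis, $r_0^TGr_0=\|r_0\|_2^2$ here, i.e.\ $1\in W(G)$ even though $1$ is not an eigenvalue. So the proposition does require the standing assumption $1\notin W(G)$ (or at least $1\notin W\bigl((G+G^T)/2\bigr)$), and under that assumption your orthogonality argument gives a complete and correct proof where the paper's is broken.
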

\begin{proof}
    Since, by definition, $\textbf{1}^T c^{\lambda}=1$, it suffices to prove that the last coefficient $c^{\lambda}_N \neq 0$. For simplicity, in the scope of this proof we write $c=c^{\lambda}$ We prove it by contradiction. Let $R_-$ be the matrix $R$ without its last column, and $c_-$ be the coefficients computed by RNA using $R_-$. Assume $c_N = 0$. In this case,
    \[
        c = [c_-;\; 0] \qquad \text{and} \qquad Rc = R_- c_-.
    \]
    This also means that, using the explicit formula for $c$ in \eqref{eq:cl},
    \[
        \frac{(R^TR)^{-1}\textbf{1}}{\textbf{1}(R^TR)^{-1}\textbf{1}} = \left[ \frac{(R_-^TR_-)^{-1}\textbf{1}}{\textbf{1}(R_-^TR_-)^{-1}\textbf{1}};\; 0\right], \qquad \Leftrightarrow \qquad (R^TR)^{-1}\textbf{1} = \left[ (R_-^TR_-)^{-1}\textbf{1};\; 0\right].
    \]
    The equivalence is obtained because
    \[
        \textbf{1}(R^TR)^{-1}\textbf{1} = \textbf{1}^T c = \textbf{1}^T c_- = \textbf{1}(R_-^TR_-)^{-1}\textbf{1}.
    \]
    We can write $c$ and $c_-$ under the form of a linear system,
    \[
        R^TRc = \alpha \textbf{1}_N, \quad (R_-^TR_-)c_- = \alpha \textbf{1}_{N-1},
    \]
    where $\alpha  = \textbf{1}(R^TR)^{-1}\textbf{1} = \textbf{1}(R_-^TR_-)^{-1}\textbf{1}$, which is nonzero. We augment the system with $c_-$ by concatenating zeros,
    \[
        R^TRc = \alpha \textbf{1}_N, \quad 
        \begin{bmatrix}
            (R_-^TR_-) & 0_{N-1 \times 1} \\ 
            0_{1 \times N-1} & 0
        \end{bmatrix}
        \begin{bmatrix}
        c_- \\
        0
        \end{bmatrix}
        = \alpha
        \begin{bmatrix}
            \textbf{1}_{N-1}\\
            0
        \end{bmatrix}
    \]
    Let $r_+$ be the residual at iteration $N$. This means $R = [R_-,r_+]$. We substract the two linear systems,
    \[
        \begin{bmatrix}
            0 & R^Tr_+ \\ 
            r_+^TR & r_+^Tr_+
        \end{bmatrix}
        \begin{bmatrix}
        c_- \\
        0
        \end{bmatrix}
        =
        \begin{bmatrix}
            0\\
            \alpha \neq 0
        \end{bmatrix}
    \]
    The $N-1$ first equations tells us that either $(R^Tr_+)_i$ or $c_{-,i}$ are equal to zero. This implies 
    \[
        (R^Tr_+)^Tc = \sum_{i=1}^{N-1}(R^Tr_+)^T_ic_i=0.
    \]
    However, the last equation reads
    \[
        (R^Tr_+)^Tc + 0\cdot r_+^Tr_+ \neq 0.
    \]
    This is a contradiction, since 
    \[
        (R^Tr_+)^Tc + 0\cdot r_+^Tr_+ = 0.
    \]
    Now, assume $R$ is not full rank. This means there exist a non-zero linear combination such that
    \[
        Rc = 0.
    \]
    However, due to its structure $R$ is a Krylov basis of the Krylov subspace
    \[
        \mathcal{K}_N = \text{span}[r_0,Gr_0,\ldots, G^{N}]
    \]
    If the rank of $R$ is strictly less $N$ (says $N-1$), this means
    \[
        \mathcal{K}_N = \mathcal{K}_{N-1}.
    \]
    Due to properties of the Krylov subspace, this means that
    \[
        r_0 = \sum_{i=1}^{N-1} \alpha_i \lambda_i v_i
    \]
    where $\lambda_i$ are distinct eigenvalues of $G$, and $v_i$ the associated eigenvector. Thus, it suffices to take the polynomial $p^*$ that interpolates the $N-1$ distinct $\lambda_i$. In this case,
    \[
        p^*(G)r_0 = 0.
    \]
    Since $p(1)\neq 0$ because $\lambda_i \leq 1-\kappa < 1$, we have
    \[
        \min \|Rc\| = \min_{p\in \pnm} \|p(G)r_0\| = \frac{p^*(G)}{p(1)}r_0 = 0. 
    \]
    which is the desired result.
\end{proof}

This shows that we can use \textit{RNA to accelerate iterates coming from RNA}. In numerical experiments, we will see that this new approach significantly improves empirical performance.

\subsection{RNA \& Nesterov's Method}
We now briefly discuss a strategy that combines Nesterov's acceleration with RNA. This means using RNA instead of the classical momentum term in Nesterov's original algorithm.  Using RNA, we can produce iterates that are asymptotically adaptive to the problem constants, while ensuring an optimal upper bound if one provides constants $L$ and $\mu$. We show below how to design a condition that decides after each gradient steps if we should combine previous iterates using RNA or Nesterov coefficients.

Nesterov's algorithm first performs a gradient step, then combines the two previous iterates. A more generic version with a basic line search reads 
\BEQ
    \begin{cases}
        \text{Find } x_{i+1} : f(x_{i+1}) \leq f(y_i) - \frac{1}{2L}\|f(y_i)\|_2^2\\
        \textstyle y_{i+1}  = (1+\beta) x_{i+1} - \beta x_{i}, \quad \beta = \frac{1-\sqrt{\kappa}}{1+\sqrt{\kappa}}\,.
    \end{cases} \label{eq:general_nesterov_step}
\EEQ
The first condition is automatically met when we perform the gradient step $x_{i+1} = x_i - \nabla f(x_i)/L$. Based on this, we propose the following algorithm.

\begin{algorithm}[htb]
   \caption{Optimal Adaptive Algorithm}
    \label{algo:optimal_adaptive}
\begin{algorithmic}
    \STATE Compute gradient step $x_{i+1} = y_{i} - \frac{1}{L} \nabla f(y_i)$.
    \STATE Compute $\yex = \textbf{RNA}(X,Y,\lambda,\eta)$.
    \STATE Let
    \[
        z = \frac{\yex + \beta x_i}{1+\beta}
    \]
    \STATE Choose the next iterate, such that
    \[
        y_{i+1} =
        \begin{cases}
            \yex \quad \text{If}\;\; f(z) \leq f(x_i) - \frac{1}{2L}\|f(x_i)\|_2^2,\\
            (1+\eta) x_i - \eta x_{i-1}\quad \text{Otherwise}.
        \end{cases}
    \]
\end{algorithmic}
\end{algorithm}

Algorithm \ref{algo:optimal_adaptive} has an optimal rate of convergence, i.e., it preserves the worst case rate of the original Nesterov algorithm. The proof is straightforward: if we do not satisfy the condition, then we perform a standard Nesterov step ; otherwise, we pick $z$ instead of the gradient step, and we combine
\[
    y_{i+1} = (1+\eta) z - \eta x_{i-1} = \yex.
\]
By construction this satisfies~\eqref{eq:general_nesterov_step}, and inherits its properties, like an optimal rate of convergence.

\section{Numerical Results}\label{s:numres}

We now study the performance of our techniques on  $\ell_2$-regularized logistic regression using acceleration on Nesterov's accelerated method\footnote{The source code for the numerical experiments can be found on GitHub at \url{https://github.com/windows7lover/RegularizedNonlinearAcceleration}}.

We solve a classical regression problem on the Madelon-UCI dataset \citep{guyon2003design} using the logistic loss with $\ell_2$ regularization. The regularization has been set such that the condition number of the function is equal to $10^{6}$. We compare to standard algorithms such as the simple gradient scheme, Nesterov's method for smooth and strongly convex objectives \citep{nesterov2013introductory} and L-BFGS. For the step length parameter, we used a backtracking line-search strategy. We compare these methods with their offline RNA accelerated counterparts, as well as with the online version of RNA described in~\eqref{eq:online_rna}. 

\begin{figure}[h!t]
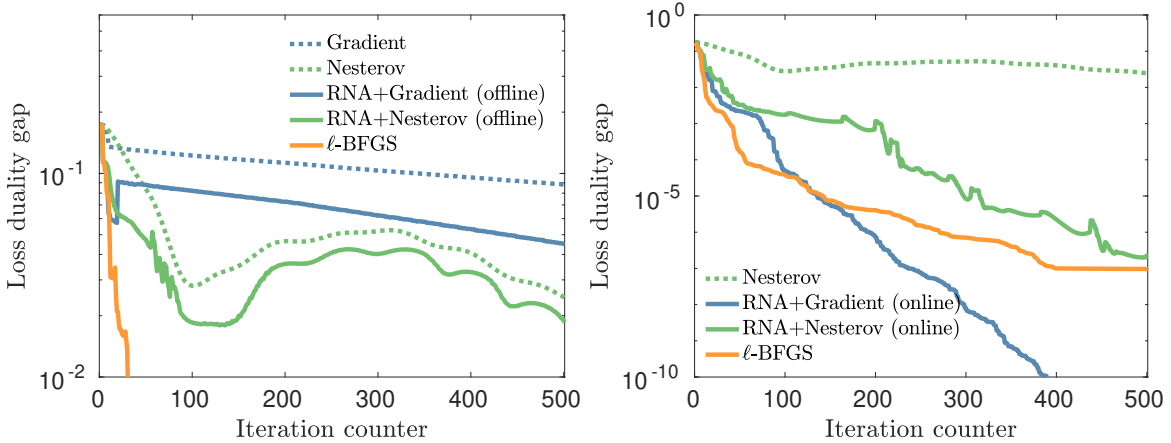

    \centering
    \includegraphics[width=0.47\textwidth]{figs/offline_madelon.eps}
    \includegraphics[width=0.47\textwidth]{figs/online_madelon.eps}
    \caption{Logistic loss on the Madelon \citep{guyon2003design}. Comparison between offline (\textit{left}) and online (\textit{right}) strategies for RNA on gradient and Nesterov's method. We use $\ell$-BFGS (with $\ell=100$ gradients stored in memory) as baseline. Clearly, one step of acceleration improves the accuracy. The performance of online RNA, which applies the extrapolation at \textit{each} step, is similar to that of L-BFGS methods, though RNA does not use line-search and requires 10 times less memory.}
    \label{fig:madelon}
\end{figure}
We observe in Figure \ref{fig:madelon} that offline RNA improves the convergence speed of gradient descent and Nesterov's method. However, the improvement is only a constant factor: the curves are shifted but have the same slope. Meanwhile, the online version greatly improves the rate of convergence, transforming the basic gradient method into an optimal algorithm competitive with line-search L-BFGS.

In contrast to most quasi-Newton methods (such as L-BFGS), RNA does \textit{not} require a Wolfe line-search to be convergent. This is because the algorithm is stabilized with a Tikhonov regularization. In addition, the regularization in a way controls the impact of the noise in the iterates, making the RNA algorithm suitable for stochastic iterations \citep{scieur2017nonlinear}.

We also tested the performance of online RNA on general non-symmetric algorithm, Primal-Dual Gradient Method (PDGM) \citep{chambolle2011first} defined in \eqref{eq:iter_cp} with $\theta=0$. We observe in Figure \ref{fig:madelon_nonsym_logistic1} that RNA has substantially improved the performance of the base algorithm.

\begin{figure}[ht]
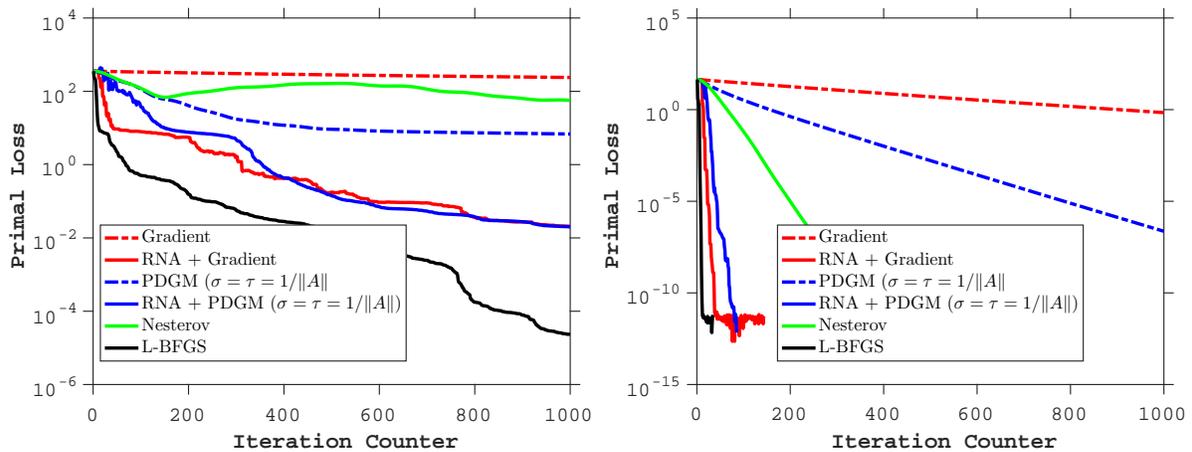

    \centering
    \includegraphics[width=0.47\textwidth]{figs/MadelonNormalized_logistic_100e-02_PrimalLoss_Journal.eps}
    \includegraphics[width=0.47\textwidth]{figs/MadelonNormalized_logistic_100e+02_PrimalLoss_Journal.eps}
    \caption{Logistic loss on the Madelon \citep{guyon2003design}. Left : $\ell_2$ regularization parameter $\mu = 10^{-2}$. Right : $\mu = 10^{2}$. Comparison of online RNA on primal-dual gradient methods with other first-order algorithms.}
    \label{fig:madelon_nonsym_logistic1}
\end{figure}

\section*{Acknowledgements}
The authors are very grateful to Lorenzo Stella for fruitful discussions on acceleration and the Chambolle-Pock method. AA is at CNRS \& d\'epartement d'informatique, \'Ecole normale sup\'erieure, UMR CNRS 8548, 45 rue d'Ulm 75005 Paris, France,  INRIA and PSL Research University. The authors would like to acknowledge support from the {\em ML \& Optimisation} joint research initiative with the {\em fonds AXA pour la recherche} and Kamet Ventures, as well as a Google focused award. DS was supported by a European Union Seventh Framework Programme (FP7- PEOPLE-2013-ITN) under grant agreement n.607290 SpaRTaN. RB was a PhD student at Northwestern University at the time this work was completed and was supported by Department of Energy grant DE-FG02-87ER25047 and DARPA grant 650-4736000-60049398. 

{\small \bibliographystyle{plainnat}
\bibsep 1ex
\bibliography{biblio,MainPerso,bib2,bib3}}

\begin{thebibliography}{32}
\providecommand{\natexlab}[1]{#1}
\providecommand{\url}[1]{\texttt{#1}}
\expandafter\ifx\csname urlstyle\endcsname\relax
  \providecommand{\doi}[1]{doi: #1}\else
  \providecommand{\doi}{doi: \begingroup \urlstyle{rm}\Url}\fi

\bibitem[Anderson(1965)]{anderson1965iterative}
Donald~G. Anderson.
\newblock Iterative procedures for nonlinear integral equations.
\newblock \emph{Journal of the ACM (JACM)}, 12\penalty0 (4):\penalty0 547--560,
  1965.

\bibitem[Brezinski and Zaglia(2013)]{brezinski2013extrapolation}
Claude Brezinski and M~Redivo Zaglia.
\newblock \emph{Extrapolation methods: theory and practice}, volume~2.
\newblock Elsevier, 2013.

\bibitem[Cabay and Jackson(1976)]{cabay1976polynomial}
Stan Cabay and LW~Jackson.
\newblock A polynomial extrapolation method for finding limits and antilimits
  of vector sequences.
\newblock \emph{SIAM Journal on Numerical Analysis}, 13\penalty0 (5):\penalty0
  734--752, 1976.

\bibitem[Chambolle and Pock(2011)]{chambolle2011first}
Antonin Chambolle and Thomas Pock.
\newblock A first-order primal-dual algorithm for convex problems with
  applications to imaging.
\newblock \emph{Journal of mathematical imaging and vision}, 40\penalty0
  (1):\penalty0 120--145, 2011.

\bibitem[Chambolle and Pock(2016)]{chambolle2016introduction}
Antonin Chambolle and Thomas Pock.
\newblock An introduction to continuous optimization for imaging.
\newblock \emph{Acta Numerica}, 25:\penalty0 161--319, 2016.

\bibitem[Choi and Greenbaum(2015)]{choi2015roots}
Daeshik Choi and Anne Greenbaum.
\newblock Roots of matrices in the study of gmres convergence and crouzeix's
  conjecture.
\newblock \emph{SIAM Journal on Matrix Analysis and Applications}, 36\penalty0
  (1):\penalty0 289--301, 2015.

\bibitem[Crouzeix(2004)]{Crou04}
Michel Crouzeix.
\newblock Bounds for analytical functions of matrices.
\newblock \emph{Integral Equations and Operator Theory}, 48\penalty0
  (4):\penalty0 461--477, 2004.

\bibitem[Crouzeix(2007)]{Crou07}
Michel Crouzeix.
\newblock Numerical range and functional calculus in hilbert space.
\newblock \emph{Journal of Functional Analysis}, 244\penalty0 (2):\penalty0
  668--690, 2007.

\bibitem[Crouzeix and Palencia(2017)]{Crou17}
Michel Crouzeix and C{\'e}sar Palencia.
\newblock The numerical range as a spectral set.
\newblock \emph{arXiv preprint arXiv:1702.00668}, 2017.

\bibitem[Donoghue(1957)]{donoghue1957}
William~F. Donoghue.
\newblock On the numerical range of a bounded operator.
\newblock \emph{Michigan Math. J.}, 4\penalty0 (3):\penalty0 261--263, 1957.
\newblock \doi{10.1307/mmj/1028997958}.
\newblock URL \url{https://doi.org/10.1307/mmj/1028997958}.

\bibitem[Eddy(1979)]{eddy1979extrapolating}
RP~Eddy.
\newblock Extrapolating to the limit of a vector sequence.
\newblock In \emph{Information linkage between applied mathematics and
  industry}, pages 387--396. Elsevier, 1979.

\bibitem[Fischer and Freund(1991)]{Fisc91}
Bernd Fischer and Roland Freund.
\newblock Chebyshev polynomials are not always optimal.
\newblock \emph{Journal of Approximation Theory}, 65\penalty0 (3):\penalty0
  261--272, 1991.

\bibitem[Golub and Van~Loan(2012)]{golub2012matrix}
Gene~H. Golub and Charles~F. Van~Loan.
\newblock \emph{Matrix Computations}, volume~3.
\newblock JHU Press, 2012.

\bibitem[Golub and Varga(1961)]{golub1961chebyshev}
Gene~H Golub and Richard~S Varga.
\newblock Chebyshev semi-iterative methods, successive overrelaxation iterative
  methods, and second order richardson iterative methods.
\newblock \emph{Numerische Mathematik}, 3\penalty0 (1):\penalty0 157--168,
  1961.

\bibitem[Gorman and Sejnowski(1988)]{gorman1988analysis}
R~Paul Gorman and Terrence~J Sejnowski.
\newblock Analysis of hidden units in a layered network trained to classify
  sonar targets.
\newblock \emph{Neural Networks}, 1:\penalty0 75, 1988.

\bibitem[Greenbaum et~al.(2017)Greenbaum, Lewis, and Overton]{Gree17}
Anne Greenbaum, Adrian~S Lewis, and Michael~L Overton.
\newblock Variational analysis of the crouzeix ratio.
\newblock \emph{Mathematical Programming}, 164\penalty0 (1-2):\penalty0
  229--243, 2017.

\bibitem[Guyon(2003)]{guyon2003design}
Isabelle Guyon.
\newblock Design of experiments of the nips 2003 variable selection benchmark,
  2003.

\bibitem[Hausdorff(1919)]{hausdorff1919wertvorrat}
Felix Hausdorff.
\newblock Der wertvorrat einer bilinearform.
\newblock \emph{Mathematische Zeitschrift}, 3\penalty0 (1):\penalty0 314--316,
  1919.

\bibitem[Johnson(1974)]{johnson1974computation}
Charles~R Johnson.
\newblock Computation of the field of values of a 2$\times$ 2 matrix.
\newblock \emph{J. Res. Nat. Bur. Standards Sect. B}, 78:\penalty0 105, 1974.

\bibitem[Johnson(1978)]{johnson1978numerical}
Charles~R Johnson.
\newblock Numerical determination of the field of values of a general complex
  matrix.
\newblock \emph{SIAM Journal on Numerical Analysis}, 15\penalty0 (3):\penalty0
  595--602, 1978.

\bibitem[Lewis and Overton(2018)]{Lewi18}
A.~Lewis and M.~Overton.
\newblock Partial smoothness of the numerical radius at matrices whose fields
  of values are disks.
\newblock \emph{Working paper (mimeo)}, 2018.

\bibitem[Mizoguchi(1960)]{arrowhurwicz1960}
Toshiyuki Mizoguchi.
\newblock K.j. arrow, l. hurwicz and h. uzawa, studies in linear and non-linear
  programming.
\newblock \emph{Economic Review}, 11\penalty0 (3):\penalty0 349--351, 1960.
\newblock URL
  \url{https://EconPapers.repec.org/RePEc:hit:ecorev:v:11:y:1960:i:3:p:349-351}.

\bibitem[Nesterov(1983)]{Nest83}
Y.~Nesterov.
\newblock A method of solving a convex programming problem with convergence
  rate ${O}(1/k^2)$.
\newblock \emph{Soviet Mathematics Doklady}, 27\penalty0 (2):\penalty0
  372--376, 1983.

\bibitem[Nesterov(2013)]{nesterov2013introductory}
Yurii Nesterov.
\newblock \emph{Introductory lectures on convex optimization: A basic course},
  volume~87.
\newblock Springer Science \& Business Media, 2013.

\bibitem[Nesterov and Polyak(2006)]{nesterov2006cubic}
Yurii Nesterov and Boris~T Polyak.
\newblock Cubic regularization of {N}ewton method and its global performance.
\newblock \emph{Mathematical Programming}, 108\penalty0 (1):\penalty0 177--205,
  2006.

\bibitem[Polyak and Juditsky(1992)]{polyak1992acceleration}
Boris~T Polyak and Anatoli~B Juditsky.
\newblock Acceleration of stochastic approximation by averaging.
\newblock \emph{SIAM Journal on Control and Optimization}, 30\penalty0
  (4):\penalty0 838--855, 1992.

\bibitem[Saad and Schultz(1986)]{saad1986gmres}
Youcef Saad and Martin~H Schultz.
\newblock Gmres: A generalized minimal residual algorithm for solving
  nonsymmetric linear systems.
\newblock \emph{SIAM Journal on scientific and statistical computing},
  7\penalty0 (3):\penalty0 856--869, 1986.

\bibitem[Scieur et~al.(2016)Scieur, d'Aspremont, and
  Bach]{scieur2016regularized}
Damien Scieur, Alexandre d'Aspremont, and Francis Bach.
\newblock Regularized nonlinear acceleration.
\newblock In \emph{Advances In Neural Information Processing Systems}, pages
  712--720, 2016.

\bibitem[Scieur et~al.(2017{\natexlab{a}})Scieur, Bach, and
  d'Aspremont]{scieur2017nonlinear}
Damien Scieur, Francis Bach, and Alexandre d'Aspremont.
\newblock Nonlinear acceleration of stochastic algorithms.
\newblock In \emph{Advances in Neural Information Processing Systems}, pages
  3985--3994, 2017{\natexlab{a}}.

\bibitem[Scieur et~al.(2017{\natexlab{b}})Scieur, Roulet, Bach, and
  d'Aspremont]{scieur2017integration}
Damien Scieur, Vincent Roulet, Francis Bach, and Alexandre d'Aspremont.
\newblock Integration methods and optimization algorithms.
\newblock In \emph{Advances in Neural Information Processing Systems}, pages
  1109--1118, 2017{\natexlab{b}}.

\bibitem[Toeplitz(1918)]{toeplitz1918algebraische}
Otto Toeplitz.
\newblock Das algebraische analogon zu einem satze von fej{\'e}r.
\newblock \emph{Mathematische Zeitschrift}, 2\penalty0 (1-2):\penalty0
  187--197, 1918.

\bibitem[Walker and Ni(2011)]{walker2011anderson}
Homer~F Walker and Peng Ni.
\newblock Anderson acceleration for fixed-point iterations.
\newblock \emph{SIAM Journal on Numerical Analysis}, 49\penalty0 (4):\penalty0
  1715--1735, 2011.

\end{thebibliography}

\end{document}